\documentclass[a4paper,12pt,draft]{amsart}
\usepackage[margin=30mm]{geometry}
\usepackage{amssymb, amsmath, amsthm, amscd}

\usepackage[T1]{fontenc}
\usepackage{eucal,mathrsfs,dsfont}
\usepackage{color}


\definecolor{mno}{rgb}{0.5,0.1,0.5}

\newcommand{\R}{\mathds R}
\newcommand{\Rd}{{\mathds R^d}}
\newcommand{\Pp}{\mathds P}
\newcommand{\Ee}{\mathds E}
\newcommand{\I}{\mathds 1}
\newcommand{\Z}{\mathds Z}
\newcommand{\var}{\textmd{Var}}
\newcommand{\Bb}{\mathscr{B}}

\newtheorem{theorem}{Theorem}[section]
\newtheorem{lemma}[theorem]{Lemma}
\newtheorem{proposition}[theorem]{Proposition}
\newtheorem{corollary}[theorem]{Corollary}

\theoremstyle{definition}

\newtheorem{remark}[theorem]{Remark}
\newtheorem*{acknowledgement}{Acknowledgement}

\begin{document}

\title[Coupling Property of L\'{e}vy Processes]{\bfseries On the Coupling Property of L\'{e}vy Processes}

\author{Ren\'{e} L.\ Schilling \qquad Jian Wang}
\thanks{\emph{R.\ Schilling:} TU Dresden, Institut f\"{u}r Mathematische Stochastik, 01062 Dresden, Germany. \texttt{rene.schilling@tu-dresden.de}}
\thanks{\emph{J.\ Wang:} School of Mathematics and Computer Science, Fujian Normal
University, 350007, Fuzhou, P.R. China \emph{and} TU Dresden,
Institut f\"{u}r Mathematische Stochastik, 01062 Dresden, Germany.
\texttt{jianwang@fjnu.edu.cn}}

\date{}

\maketitle

\begin{abstract}
We give necessary and sufficient conditions guaranteeing that the coupling for
 L\'evy processes (with non-degenerate jump part) is successful.
 Our method relies on explicit formulae for the transition semigroup of
 a compound Poisson process and earlier results by Mineka and Lindvall-Rogers on couplings of random walks.
 In particular, we obtain that a L\'{e}vy process admits a successful coupling,
 if it is a strong Feller process or if the L\'evy (jump) measure has an absolutely continuous component.

\medskip

\noindent\textbf{Keywords:} Coupling property; L\'{e}vy processes;
compound Poisson processes; random walks; Mineka coupling; strong Feller property.
\medskip

\noindent \textbf{MSC 2010:} 60G51; 60G50; 60J25; 60J75.
\end{abstract}

\medskip

\begin{center}{\large \bfseries Sur la propri\'{e}t\'{e} de couplage des
processus de L\'{e}vy}\end{center}

\begin{abstract}[R\'{e}sum\'{e}]
Nous donnons les conditions n\'{e}cessaires et suffisantes
garantissant le succ\`{e}s de couplage des processus de L\'{e}vy
(avec une jump-part non-d\'{e}g\'{e}n\'{e}r\'{e}e). Notre
m\'{e}thode est bas\'{e}e sur les formules explicites pour le
semigroupe de transition d'un processus de Poisson compos\'{e}, et
les r\'{e}sultats de Mineka et Lindvall-Rogers sur le couplage de
marche al\'{e}atoire. En particulier, nous montrons qu'un processus
de L\'{e}vy admet un couplage r\'{e}ussi, s'il est un processus
fortement fellerien ou si la mesure de L\'{e}vy (saut) poss\`{e}de
une composante absolument continue.

\medskip

\noindent\textbf{Mots-cl\'{e}s:} Propri\'{e}t\'{e} de couplage;
processus de L\'{e}vy; processus de Poisson compos\'{e}; marche
al\'{e}atoire; couplage de  Mineka; Propri\'{e}t\'{e} forte de
Feller.
\end{abstract}

\section{Introduction and Main Results}\label{section1}
The coupling method is a powerful tool in the study of Markov
processes and interacting particle systems. There are some
comprehensive books on this topic now, see e.g.\
\cite{Li,T,Chen,Wang}. Let $(X_t)_{t\ge0}$ be a Markov process on $\R^d$ with transition probability function $\{P_t(x,\cdot)\}_{t\ge 0, x\in\R^d}$. An $\R^{2d}$-valued process $(X'_t,X''_t)_{t\ge0}$ is
called \emph{a coupling of the Markov process} $(X_t)_{t\ge0}$, if
both $(X'_t)_{t\ge0}$ and $(X''_t)_{t\ge0}$ are Markov processes which have the same
transition functions $P_t(x,\cdot)$ but possibly different initial
distributions. In this case, $(X'_t)_{t\ge0}$ and $(X''_t)_{t\ge0}$ are called the \emph{marginal
processes} of the coupling process; the coupling time is
defined by $T:=\inf\{t\ge0: X'_t=X''_t\}$. The coupling $(X'_t,X''_t)_{t\ge0}$ is
said to be \emph{successful} if $T$ is a.s.\ finite. A Markov process $(X_t)_{t\ge0}$
admits a \emph{successful coupling} (also: enjoys the \emph{coupling property}) if
for any two initial distributions $\mu_1$ and $\mu_2$, there exists
a successful coupling with marginal processes possessing the same transition probability functions $P_t(x,\cdot)$ and starting from $\mu_1$
and $\mu_2$, respectively. It is known, see \cite{Li, T1}, that the
coupling property is equivalent to the statement that
\begin{equation}\label{prex1}
    \lim_{t\rightarrow\infty}\|\mu_1P_t-\mu_2P_t\|_{\var}=0\qquad\textrm{ for } \mu_1\text{\ and\ }\mu_2\in \mathscr{P}(\R^d).
\end{equation}
As usual, $\mu P(A) = \int P(x,A)\,\mu(dx)$ is the left action of
the semigroup and $\|\cdot\|_{\var}$ stands for the total variation
norm. If a Markov process admits a successful coupling, then it also
has the Liouville property, i.e.\ every bounded harmonic function is
constant; in this context a function $f$ is harmonic, if $Lf=0$
where $L$ is the generator of the Markov process. See \cite{CG,CW}
and references therein for this result and more details on the
coupling property.

The aim of this paper is to study the coupling property of L\'{e}vy processes by using explicit conditions on L\'{e}vy measures. Our work is mainly motivated by the recent paper \cite{W1}, which contains some interesting results on the coupling property of Ornstein-Uhlenbeck processes; the paper \cite{W1} uses mainly the conditional Girsanov theorem on Poisson space and assumes that the corresponding L\'{e}vy measure has a non-trivial absolutely continuous part. Our technique here is completely different from F.-Y.\ Wang's paper \cite{W1}. We use an explicit expression of the compound Poisson semigroup and combine this with the Mineka- and Lindvall-Rogers-couplings for random walks, see \cite{LR}.

A L\'evy process $(X_t)_{t\geq 0}$ on $\Rd$ is a stochastic process with stationary and independent increments and c\`adl\`ag (right continuous with finite left limits) paths. It is well known that $X_t$ is a (strong) Markov process whose infinitesimal generator is, for $f\in C_b(\Rd)$, of the form
$$
    Lf(x)=  \frac 12\sum_{i,j=1}^d q_{i,j} \frac{\partial^2 f(x)}{\partial x_i \partial x_j} + \int \bigg[f(x+z)-f(x)- \frac{z\cdot\nabla f(x)}{1+|z|^2}\bigg]\nu(dz)+b\cdot\nabla f(x),
$$
where $Q=(q_{i,j})_{i,j=1}^d$ is a positive semi-definite matrix,
$b\in\Rd$ is the drift vector and $\nu$ is the L\'evy or jump
measure; the L\'{e}vy measure $\nu$ is a $\sigma$-finite measure on
$(\R^d,\Bb(\R^d))$ such that $\int(1\wedge |z|^2)\nu(dz)<\infty$.
Note that the L\'evy triplet $(b,Q,\nu)$ characterizes, up to
indistinguishability, the process $(X_t)_{t\geq 0}$ uniquely.
Our standard reference for L\'{e}vy processes
is the monograph \cite{SA}.
 We write $P_t(x,A) = P_t(A-x)$,
$A\in\Bb(\Rd)$, for the transition probability of $X_t$.

Let $\mu$ and $\nu$ be two bounded measures on $(\R^d,\Bb(\R^d))$.
We define
$\mu\wedge\nu:=\mu-(\mu-\nu)^+$, where $(\mu-\nu)^{\pm}$ is the
Jordan-Hahn decomposition of the signed measure $\mu-\nu$. In
particular, $\mu\wedge\nu=\nu\wedge\mu$ and $\mu\wedge
\nu\,(\R^d)=\frac{1}{2}\big[\mu(\R^d)+\nu(\R^d)-\|\mu-\nu\|_{\var}\big],$ cf.\ \cite{Chen}.
 We can now state our main result.

\begin{theorem}\label{th2}
Let $(X_t)_{t\geq 0}$ be a $d$-dimensional L\'evy process with
L\'evy triplet $(b,Q,\nu)$. For every $\varepsilon>0$, define
${\nu}_\varepsilon$ by
\begin{equation}\label{eq:th2}
    {\nu}_\varepsilon(B)
    =
    \begin{cases}
        \qquad\nu(B),\qquad& \nu(\R^d)<\infty;\\
        \nu\{z\in B: |z|\ge \varepsilon\},& \nu(\R^d)=\infty.
    \end{cases}
\end{equation}
Assume that there exist $\varepsilon, \delta>0$ such that
\begin{equation}\label{th22}
    \inf_{x\in\R^d,|x|\le \delta}\nu_\varepsilon\wedge (\delta_x*\nu_\varepsilon)(\R^d)>0.
\end{equation}
Then, there exists a constant $C=C(\varepsilon,\delta,\nu)>0 $ such that for all $x, y\in\R^d$ and $t>0$,
\begin{equation*}\label{th21}
    \|P_t(x,\cdot)-P_t(y,\cdot)\|_{\var}
    \le
    \frac{C(1+|x-y|)}{\sqrt{t}}\wedge2.
\end{equation*}
In particular, the L\'{e}vy process $X_t$ admits a successful coupling.
\end{theorem}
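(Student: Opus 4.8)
The plan is to decompose the Lévy process into a compound Poisson part, whose jumps are governed by $\nu_\varepsilon$, and an independent remainder, and then to couple only the compound Poisson part using the Mineka–Lindvall-Rogers coupling for random walks. Concretely, when $\nu(\R^d)=\infty$ write $X_t = Y_t + Z_t$ where $(Y_t)$ is a compound Poisson process with Lévy measure $\nu_\varepsilon$ (all jumps of size $\ge\varepsilon$) and $(Z_t)$ is an independent Lévy process carrying the Gaussian part, the drift, and the small jumps; when $\nu(\R^d)<\infty$ the whole process is compound Poisson. Since $\|P_t(x,\cdot)-P_t(y,\cdot)\|_{\var}$ for the sum is controlled by the total variation distance of the compound Poisson parts started from $x$ and $y$ (the independent $Z_t$ can be run identically in both marginals), it suffices to bound $\|P^Y_t(x,\cdot)-P^Y_t(y,\cdot)\|_{\var}$.

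Next I would use the explicit form of the compound Poisson semigroup: conditionally on the Poisson clock having rung $n$ times by time $t$ (probability $e^{-\lambda t}(\lambda t)^n/n!$ with $\lambda = \nu_\varepsilon(\R^d)$), the process has made an $n$-step random walk with step law $\nu_\varepsilon/\lambda$. Thus $P^Y_t$ is a Poisson mixture of the $n$-step transition kernels of this random walk, and the coupling problem reduces to coupling two copies of the random walk started from $x$ and $y$. Here condition \eqref{th22} is exactly the Lindvall–Rogers hypothesis that guarantees a Mineka-type coupling: it says the step distribution and its translate by any $x$ with $|x|\le\delta$ have a common mass bounded below, so one can reflect/match increments and reduce the discrepancy $x-y$ by a symmetric random walk on $\R$ with non-degenerate, bounded increments. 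The Mineka coupling inequality then gives, for the $n$-step walk, a bound of order $(1+|x-y|)/\sqrt{n}$ on the total variation distance (a local CLT / Kolmogorov-type estimate for the coupling time of a mean-zero bounded random walk on the real line). I would either cite \cite{LR} directly for this random-walk estimate or reprove the one-dimensional version.

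Finally I would integrate the $n$-step bound against the Poisson weights: roughly, $\sum_n e^{-\lambda t}(\lambda t)^n/n!\cdot \big((1+|x-y|)/\sqrt n \wedge 2\big)$, and since the Poisson$(\lambda t)$ distribution concentrates around $\lambda t$ with $\Ee[1/\sqrt N]\asymp 1/\sqrt{\lambda t}$ for large $t$, this produces the claimed bound $C(1+|x-y|)/\sqrt t \wedge 2$ with $C$ depending on $\varepsilon,\delta,\nu$ through $\lambda$ and the lower bound in \eqref{th22}. The successful coupling then follows immediately: letting $t\to\infty$ gives $\|\mu_1 P_t-\mu_2 P_t\|_{\var}\to 0$ for all initial laws by dominated convergence, which by \eqref{prex1} is equivalent to the coupling property. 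The main obstacle is the passage from the abstract hypothesis \eqref{th22} to a genuine Mineka coupling that yields a $1/\sqrt n$ rate uniformly in the starting points: one must check that \eqref{th22} really supplies increments with a uniformly non-degenerate symmetrized component so that the Lindvall–Rogers machinery applies, and handle the bookkeeping that in dimension $d>1$ the coupling acts along the one-dimensional line through $x-y$ while the orthogonal components are simply copied.
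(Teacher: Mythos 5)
Your proposal follows essentially the same route as the paper: decompose off the $\nu_\varepsilon$-compound-Poisson part and discard the independent remainder by a contraction/identical-coupling argument, write the compound Poisson semigroup as a Poisson mixture of random-walk kernels, apply the Mineka--Lindvall-Rogers coupling (after rotating onto the line through $x-y$ and chaining in steps of length $\delta$) to get the $(1+|x-y|)/\sqrt{n}$ random-walk bound, and integrate against the Poisson weights to convert $1/\sqrt{n}$ into $1/\sqrt{\lambda t}$. The "main obstacle" you flag is precisely what the paper's Proposition 3.3 resolves, so the plan is sound and matches the published argument.
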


\begin{remark}\label{th22re}
Condition \eqref{th22} guarantees that
$$
    \|P_t(x,\cdot)-P_t(y,\cdot)\|_{\var}
    =\mathsf{O}(t^{-1/2})\qquad\text{\ as\ }\,\,t\rightarrow\infty .
$$
holds locally uniformly for all $x,y\in\R^d$. This order of
convergence is known to be optimal for compound Poisson processes,
see \cite[Remark 3.1]{W1}. In \cite{W1} it is pointed out that a
pure jump L\'evy process admits a successful coupling only if the
L\'{e}vy measure has a non-discrete support, in order to make the
process more active. Condition \eqref{th22} is one possibility to
guarantees sufficient jump activity; intuitively it will hold if for
sufficiently small values of $\varepsilon, \delta>0$ and all
$x\in\R^d$ with $|x|\le \delta$ we have
$x+\text{supp}(\nu_\varepsilon)\cap\text{supp}(\nu_\varepsilon)\neq\emptyset$;
here $\text{supp}(\nu_\varepsilon)$ is the support of the measure
$\nu_\varepsilon$.

In order to see that \eqref{th22} is sharp, we consider an
one-dimensional compound Poisson process with L\'{e}vy measure $\nu$
supported on $\Z$. Then, for any $\delta\in(0,1)$ and $x\in\R^d$
with $|x|\le \delta$, $\nu\wedge (\delta_x*\nu)(\Z)=0$. On the other
hand, all functions satisfying $f(x+n)=f(x)$ for $x\in\R^d$ and
$n\in \Z$ are harmonic. By \cite{CG,CW}, this process cannot have
the coupling property.
\end{remark}

Theorem 3.1 of \cite{W1} establishes the coupling property for L\'{e}vy processes whose jump measure  $\nu$ has a non-trivial absolutely continuous part. It seems to us that this condition is not
directly comparable with \eqref{th22}. In fact, based on the Lindvall-Rogers `zero-two law' for random walks \cite[Propsotion 1]{LR}, we give in Section \ref{section4} a necessary and sufficient condition guaranteeing that a L\'evy process has the coupling property. In this section we will also find the connection between \eqref{th22} and the existence of a non-trivial absolutely
continuous component of the L\'{e}vy measure. In particular, we obtain some extensions of Theorem \ref{th2} and \cite[Theorem 3.1]{W1}.

\medskip

Once we know that a L\'evy process admits the coupling property, many interesting new questions arise which are, however, beyond the scope of the present paper. For example, it would be interesting to construct explicitly the corresponding successful Markov coupling
process and to determine its infinitesimal operator. There are a number of applications
of optimal Markov processes and operators; we refer to \cite{Chen,Wang} for background material and a more detailed account on diffusions and $q$-processes. We will discuss those topics for L\'{e}vy processes in a forthcoming paper \cite{BSW}.

\section{The Coupling Property of Compound Poisson Processes}\label{section2}
In this section, we consider the coupling property of compound Poisson processes. Let $(L_t)_{t\geq 0}$ be a compound Poisson process on $\R^d$ such that $L_0 = x$ and with L\'{e}vy measure $\nu$. Then, $L_t$ can be written as
\begin{equation*}\label{com1}
    L_t=x+\sum_{i=1}^{N_t}\xi_i, \qquad t\ge0,
\end{equation*}
where $N_t$ is the Poisson process with rate $\lambda:=\nu(\R^d)$ and $(\xi_i)_{i\ge1}$ is a sequence of iid random variables on $\R^d$ with distribution $\nu(\cdot)/\lambda$; moreover, we assume that the $\xi_i$'s are independent of $N_t$. As usual we use the convention that $\sum_{i=1}^0\xi_i=0$.

The transition semigroup for a compound Poisson process is explicitly known. This allows us to reduce the coupling problem for a compound Poisson processes to that of a random walk. Let $P_t$ and $L$ be the semigroup and the generator for $L_t$, respectively. Then, it is well known that for any $f\in B_b(\R^d)$,
\begin{align*}
    Lf(x)
    &=\int \big(f(x+z)-f(x)\big)\nu(dz)\\
    &=\lambda\int \big(f(x+z)-f(x)\big)\nu_0(dz)
\end{align*}
and
\begin{equation}\label{coup0}
    {P}_t=e^{tL}=\sum_{n=0}^\infty \frac{t^nL^n}{n!}=e^{-\lambda t}\sum_{n=0}^\infty\frac{(t\lambda)^n{\nu_0^*}^n}{n!},\qquad t\ge0;
\end{equation}
here ${\nu_0^*}^n$ is the $n$-fold convolution of $\nu_0$ and ${\nu_0^*}^0:=\delta_0$.

\medskip

The following result explains the relationship of transition probabilities of compound Poisson processes and of random walks.
\begin{proposition}\label{couplingo}
    Let $P_t(x,\cdot)$ be the transition probability of compound Poisson process $L=(L_t)_{t\geq 0}$ and let $S=(S_n)_{n\geq 1}$, $S_n = \xi_1 + \ldots + \xi_n$, be a random walk where $(\xi_i)_{i\geq 1}$ are iid random variables with $\xi_1\sim \nu_0:=\nu/\nu(\R^d)$. Then, for all $x, y\in\R^d$
    \begin{align*}
        \|P_t(x,&\cdot)-P_t(y,\cdot)\|_{\var}\\
        &\le e^{-\lambda t}\sum_{n=0}^\infty \frac{(\lambda t)^n \, \|\Pp(x+S_n\in \cdot)-\Pp(y+S_n\in \cdot)\|_{\var}}{n!}\\
        &= e^{-\lambda t}\bigg[2(1-\delta_{x,y}) + \sum_{n=1}^\infty \frac{(\lambda t)^n\,\|\Pp(x+S_n\in \cdot)-\Pp(y+S_n\in\cdot)\|_{\var}}{n!}\bigg],
    \end{align*}
    where $\delta_{x,y}$ is a Kronecker delta function, i.e.\ $\delta_{x,y}=1$ if $x=y$, and $0$ otherwise.
\end{proposition}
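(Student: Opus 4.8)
The plan is to use the explicit series representation \eqref{coup0} for the compound Poisson semigroup together with the fact that total variation distance is subadditive along the convolution structure of the series. Writing the transition probability as $P_t(x,\cdot) = e^{-\lambda t}\sum_{n=0}^\infty \frac{(\lambda t)^n}{n!}\,\delta_x * {\nu_0^*}^n$, I observe that $\delta_x * {\nu_0^*}^n$ is exactly the law of $x + S_n$, where $S_n = \xi_1+\cdots+\xi_n$ is the random walk with step law $\nu_0$ (and $S_0 := 0$). Thus $P_t(x,\cdot)$ is a countable convex combination (the Poisson weights $e^{-\lambda t}(\lambda t)^n/n!$ sum to $1$) of the measures $\Pp(x+S_n\in\cdot)$, and similarly for $P_t(y,\cdot)$ with the same weights.

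The key step is then the triangle-type inequality for the total variation norm: if $\mu = \sum_n c_n \mu_n$ and $\tilde\mu = \sum_n c_n \tilde\mu_n$ with $c_n \ge 0$, then
\begin{equation*}
    \|\mu - \tilde\mu\|_{\var} = \Big\|\sum_n c_n(\mu_n - \tilde\mu_n)\Big\|_{\var} \le \sum_n c_n \|\mu_n - \tilde\mu_n\|_{\var}.
\end{equation*}
Applying this with $c_n = e^{-\lambda t}(\lambda t)^n/n!$, $\mu_n = \Pp(x+S_n\in\cdot)$ and $\tilde\mu_n = \Pp(y+S_n\in\cdot)$ gives the first inequality of the Proposition. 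For the displayed equality, I just isolate the $n=0$ term: since $S_0 = 0$, we have $\Pp(x+S_0\in\cdot) = \delta_x$ and $\Pp(y+S_0\in\cdot) = \delta_y$, so $\|\delta_x - \delta_y\|_{\var} = 2$ if $x\ne y$ and $=0$ if $x=y$, i.e.\ $2(1-\delta_{x,y})$; the factor $(\lambda t)^0/0! = 1$ matches. The remaining terms $n\ge 1$ are written out unchanged.

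The only point requiring a little care — and what I'd flag as the main (minor) obstacle — is the justification that the total variation norm may be pulled inside the infinite sum: one should check that $\sum_n c_n \|\mu_n - \tilde\mu_n\|_{\var} \le 2\sum_n c_n = 2 < \infty$, so all series converge absolutely in the Banach space of finite signed measures under $\|\cdot\|_{\var}$, legitimizing the interchange and the term-by-term triangle inequality. With that observation the argument is complete.
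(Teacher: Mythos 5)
Your proof is correct and is essentially the same as the paper's: both rest on the series representation \eqref{coup0}, which exhibits $P_t(x,\cdot)$ as a Poisson mixture of the laws of $x+S_n$, followed by the triangle inequality for the total variation norm applied term by term (the paper phrases this via the dual formulation $\sup_{\|f\|_\infty\le 1}|P_tf(x)-P_tf(y)|$, which amounts to the same estimate). Your explicit remark on absolute convergence of the series in the Banach space of finite signed measures is a welcome, if routine, addition that the paper leaves implicit.
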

\begin{proof}
    Let $P_t$ and $P^S_n$ be the semigroups of the compound Poisson process $L$ and the random walk $S$, respectively.
    Because of \eqref{coup0} we find
    \begin{align*}
        \|P_t(x,\cdot)-P_t(y,\cdot)\|_{\var}
        &=\sup_{\|f\|_\infty\le 1}|P_tf(x)-P_tf(y)|\\
        &=e^{-\lambda t}\bigg|\sup_{\|f\|_\infty\le 1}\sum_{n=0}^\infty\frac{(\lambda t)^n\big(\delta_x*{\nu_0^*}^n(f)-\delta_y*{\nu_0^*}^n(f)\big)}{n!}\bigg|\\
        &\leq e^{-\lambda t}\bigg|\sup_{\|f\|_\infty\le 1}\sum_{n=0}^\infty\frac{(\lambda t)^n\big(P^S_nf(x)-P^S_nf(y)\big)}{n!}\bigg|\\
        &\leq e^{-\lambda t}\sum_{n=0}^\infty\frac{(\lambda t)^n\sup_{\|f\|_\infty\le 1}|P^S_nf(x)-P^S_nf(y)|}{n!}\\
        &= e^{-\lambda t}\sum_{n=0}^\infty \frac{(\lambda t)^n\,\|\Pp(x+S_n\in \cdot)-\Pp(y+S_n\in \cdot)\|_{\var}}{n!},
    \end{align*}
    which proves the first assertion.

    For $n=0$ we have $S_0=0$; thus
    $$
        \|\Pp(x+S_0\in \cdot)-\Pp(y+S_0\in\cdot)\|_{\var}
        =\|\delta_x-\delta_y\|_{\var}
        =2(1-\delta_{x,y})
    $$
    and the second assertion follows.
\end{proof}

An immediate of Proposition \ref{couplingo} is the following estimate for $\|P_t(x,\cdot)-P_t(y,\cdot)\|_{\var}$ which is based on a similar inequality for $\|\Pp(x+S_n\in \cdot)-\Pp(y+S_n\in\cdot)\|_{\var}$.
\begin{proposition}\label{couplingo2}
    Assume that for all $x, y\in\R^d$ there is a constant $C(x,y)>0$ such that
    \begin{equation}\label{th120}
        \|\Pp(x+S_n\in \cdot)-\Pp(y+S_n\in \cdot)\|_{\var}\le \frac{C(x,y)}{\sqrt{n}}
        \qquad\text{\ for\ }n\ge1.
    \end{equation}
    Then,
    \begin{equation*}\label{th12}
        \|P_t(x,\cdot) - P_t(y,\cdot)\|_{\var}
        \leq 2e^{-\lambda t}(1-\delta_{x,y}) +\frac{\sqrt{2}C(x,y)(1-e^{-\lambda t})}{\sqrt{\lambda t}}.
    \end{equation*}
\end{proposition}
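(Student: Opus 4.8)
The plan is to feed the hypothesis \eqref{th120} straight into Proposition \ref{couplingo}. Using the bound $\|\Pp(x+S_n\in\cdot)-\Pp(y+S_n\in\cdot)\|_{\var}\le C(x,y)/\sqrt n$ for $n\ge1$ (and the exact value $2(1-\delta_{x,y})$ for $n=0$), and pulling the constant out of the sum, the whole assertion reduces to the purely scalar inequality
\[
    e^{-\lambda t}\sum_{n=1}^\infty\frac{(\lambda t)^n}{n!\,\sqrt n}\le\frac{\sqrt2\,(1-e^{-\lambda t})}{\sqrt{\lambda t}},\qquad t>0,
\]
the summand $2e^{-\lambda t}(1-\delta_{x,y})$ surviving untouched.

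To establish this I would read the left-hand side probabilistically: writing $N$ for a Poisson random variable with parameter $\lambda t$, one has $e^{-\lambda t}\sum_{n\ge1}(\lambda t)^n/(n!\sqrt n)=\Ee\big[N^{-1/2}\I_{\{N\ge1\}}\big]$ (equivalently, one applies the Cauchy--Schwarz inequality to the series with the weights $(\lambda t)^n/n!$). By the Cauchy--Schwarz inequality,
\[
    \Ee\big[N^{-1/2}\I_{\{N\ge1\}}\big]\le\Big(\Ee\big[N^{-1}\I_{\{N\ge1\}}\big]\Big)^{1/2}\big(\Pp(N\ge1)\big)^{1/2}=\Big(\Ee\big[N^{-1}\I_{\{N\ge1\}}\big]\Big)^{1/2}(1-e^{-\lambda t})^{1/2},
\]
so it suffices to show $\Ee\big[N^{-1}\I_{\{N\ge1\}}\big]\le 2(1-e^{-\lambda t})/(\lambda t)$; combined with the displayed line this gives exactly the desired bound and accounts for the factor $\sqrt2$.

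For the remaining estimate I would use the elementary inequality $1/n\le 2/(n+1)$, valid for every $n\ge1$, to obtain
\[
    \Ee\big[N^{-1}\I_{\{N\ge1\}}\big]=e^{-\lambda t}\sum_{n=1}^\infty\frac{(\lambda t)^n}{n\cdot n!}\le 2e^{-\lambda t}\sum_{n=1}^\infty\frac{(\lambda t)^n}{(n+1)!}=\frac{2e^{-\lambda t}}{\lambda t}\big(e^{\lambda t}-1-\lambda t\big)\le\frac{2(1-e^{-\lambda t})}{\lambda t},
\]
where the last step uses $e^{\lambda t}-1-\lambda t\le e^{\lambda t}-1$. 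Chasing these inequalities back through Proposition \ref{couplingo} then yields the proposition.

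There is no serious obstacle: the argument is a short chain of elementary estimates. The only point requiring a moment's thought is that $\sum_{n\ge1}(\lambda t)^n/(n\cdot n!)$ has no convenient closed form, so it has to be reshaped; the substitution $1/n\le 2/(n+1)$, which turns it into a shifted exponential series, is the natural device and is precisely where the constant $\sqrt2$ in the statement originates. (This route is not optimal for the scalar inequality, but the slack in Cauchy--Schwarz and in $1/n\le2/(n+1)$ happens to combine into the clean factor $\sqrt2$.)
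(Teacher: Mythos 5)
Your proposal is correct and is essentially the paper's own argument in probabilistic dress: the paper applies Jensen's inequality for $x\mapsto\sqrt{x}$ to the Poisson weights $(\lambda t)^n/n!$ normalized by $e^{\lambda t}-1$, which is exactly your Cauchy--Schwarz step $\Ee[N^{-1/2}\I_{\{N\ge1\}}]\le(\Ee[N^{-1}\I_{\{N\ge1\}}])^{1/2}\Pp(N\ge1)^{1/2}$, and then uses the same bound $\tfrac{1}{n}\le\tfrac{2}{n+1}$ to reduce $\sum_{n\ge1}\frac{(\lambda t)^n}{n\cdot n!}$ to a shifted exponential series. All constants and intermediate estimates match, so no further comment is needed.
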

\begin{proof}
A combination of Proposition \ref{couplingo} and \eqref{th120} yields for all $x,y\in\R^d$ $$
    \|P_t(x,\cdot)-P_t(y,\cdot)\|_{\var}\le e^{-\lambda t}\bigg[2(1-\delta_{x,y})  +C(x,y)\sum_{n=1}^\infty \frac{(\lambda t)^n }{\sqrt{n}n!}\bigg].
$$
Jensen's inequality for the concave function $x^{1/2}$ gives
\begin{align*}
    \sum_{n=1}^\infty \sqrt{\frac 1n} \frac{(\lambda t)^n }{n!}
    &\le \frac{e^{\lambda t}-1}{(e^{\lambda t}-1)^{1/2}}\left(\sum_{n=1}^\infty\frac{(\lambda t)^n}{n\cdot n!}\right)^{1/2}\\
    &=\left(\frac{e^{\lambda t}-1}{\lambda t}\right)^{1/2}\left(\sum_{n=1}^\infty\frac{(\lambda t)^{n+1}}{(n+1)!}\cdot\frac{n+1}{n}\right)^{1/2}\\
    &\le\left(\frac{e^{\lambda t}-1}{\lambda t}\right)^{1/2} \sqrt 2 \Big(e^{\lambda t}-1-\lambda t\Big)^{1/2}\\
    &\le \frac{\sqrt{2}(e^{\lambda t}-1)}{\sqrt{\lambda t}}.
\end{align*}
The required assertion follows form the estimates above.
\end{proof}

We will now show that $L$ has the coupling property whenever $S$ has.
\begin{proposition}\label{wcp}
    Let $(L_t)_{t\geq 0}$ be the compound Poisson process with L\'{e}vy measure $\nu$, and
    $(S_n)_{n\geq 0}$, $S_n = S_0 + \xi_1 + \ldots + \xi_n$, be a random walk where $(\xi_i)_{i\geq 1}$ are iid random variables with $\xi_1\sim \nu_0:=\nu/\nu(\R^d)$. If $S_n$ admits a successful coupling, so does $L_t$.
\end{proposition}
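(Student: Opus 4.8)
The plan is to transfer a successful coupling of the random walk $(S_n)_{n\ge 0}$ to the compound Poisson process $(L_t)_{t\ge 0}$ by running the random-walk coupling along the jump times of a single, shared Poisson clock. Concretely, fix two initial points $x,y\in\R^d$ (general initial distributions $\mu_1,\mu_2$ are handled by integrating out). Let $(N_t)_{t\ge 0}$ be one Poisson process with rate $\lambda=\nu(\R^d)$, and let $0<\tau_1<\tau_2<\cdots$ be its jump times. Let $\big((S'_n,S''_n)\big)_{n\ge 0}$ be a successful coupling of the random walk with $S'_0=x$, $S''_0=y$ — this exists by hypothesis — and assume it is built independently of $(N_t)_{t\ge 0}$. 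Now set $X'_t:=S'_{N_t}$ and $X''_t:=S''_{N_t}$.

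The key steps are then: (i) verify that $(X'_t)_{t\ge 0}$ is a version of the compound Poisson process started at $x$, and likewise $(X''_t)_{t\ge 0}$ started at $y$; this is exactly the standard subordination-type identity behind formula \eqref{coup0} — a Markov process obtained by sampling a discrete-time Markov chain with one-step kernel $\nu_0$ at the epochs of an independent rate-$\lambda$ Poisson process has transition semigroup $e^{-\lambda t}\sum_{n\ge 0}\frac{(\lambda t)^n}{n!}\nu_0^{*n}$, which is $P_t$. Both marginal processes are clearly c\`adl\`ag and Markov with the correct transition function, so $(X'_t,X''_t)_{t\ge 0}$ is a coupling of $(L_t)_{t\ge 0}$. (ii) Bound the coupling time: if $T^S:=\inf\{n\ge 0: S'_n=S''_n\}$ is the (a.s.\ finite) coupling time of the random walk, then on the event $\{N_t\ge T^S\}$ we have $X'_t=X''_t$, so the coupling time $T$ of $(X'_t,X''_t)$ satisfies $T\le \tau_{T^S}$. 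Since $T^S<\infty$ a.s.\ and $\tau_n<\infty$ a.s.\ for every fixed $n$ (indeed $\tau_n\to\infty$ but is finite), we get $T\le \tau_{T^S}<\infty$ a.s.

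One technical point deserves care, and I expect it to be the main (minor) obstacle: a coupling in the sense defined in the introduction requires the two marginals to be Markov processes \emph{individually}, and once $S'$ and $S''$ have met we should let them move together afterwards — i.e.\ the random-walk coupling should be taken to be a \emph{coupling that sticks} (after time $T^S$ set $S''_n:=S'_n$), which is harmless since sticking does not change the marginal law of either component and does not increase $T^S$. With that convention the identity $X'_t=X''_t$ for all $t\ge T$ also holds, as it should. A second small point: strictly, the definition asks for couplings starting from arbitrary $\mu_1,\mu_2\in\mathscr P(\R^d)$; given successful random-walk couplings from all pairs of points, one obtains a successful random-walk coupling from $\mu_1,\mu_2$ by first sampling $(S'_0,S''_0)$ from an optimal (or any) coupling of $\mu_1,\mu_2$ and then proceeding as above, and the construction of $(X'_t,X''_t)$ goes through verbatim. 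Alternatively, and more in the spirit of the rest of the paper, one may simply invoke Proposition \ref{couplingo}: a successful random-walk coupling from $x$ and $y$ forces $\|\Pp(x+S_n\in\cdot)-\Pp(y+S_n\in\cdot)\|_{\var}\to 0$, whence the series bound in Proposition \ref{couplingo} together with dominated convergence gives $\|P_t(x,\cdot)-P_t(y,\cdot)\|_{\var}\to 0$ as $t\to\infty$, which by \eqref{prex1} is the coupling property; but the explicit pathwise construction above is cleaner and yields the quantitative bound $T\le\tau_{T^S}$ for free.
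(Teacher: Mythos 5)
Your proposal is correct and follows essentially the same route as the paper: both run a successful (sticky, $N$-independent) random-walk coupling along the jump times of a single shared Poisson clock and observe that the resulting pair of compound Poisson processes meets by time $\tau_{T^S}$, which is a.s.\ finite. The only cosmetic difference is that the paper proves the exact identity $T^L_{x,y}=\inf\{t>0: N_t\ge T^S_{x,y}\}$ rather than just the upper bound, but the bound is all that is needed.
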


\begin{proof}
    For $x, y\in\R^d$, let $L_t$ be a compound Poisson process starting from $x\in\R^d$. Then $L_t=\sum_{i=0}^{N_t}\xi_i$, where $\xi_0=x$ and where $(\xi_i)_{i\geq 1}$ are iid random variables with common distribution $\nu_0:=\nu/\nu(\R^d)$; $(N_t)_{t\geq 0}$ is a Poisson process with rate $\lambda:=\nu(\R^d)$. Moreover, $(N_t)_{t\geq 0}$ and $(\xi_i)_{i\geq 1}$ are independent.

    Set $S_0=x$ and $S_n=\sum_{i=0}^n \xi_i$ for $n\ge1$. Since $S$ has the coupling property, there exists another random walk $S'_n = \sum_{i=0}^n \xi_i'$ such that $S - S_0$ and $S'-S'_0$ have the same law and such that for any starting point $\xi_0' = y$ of $S'$ the coupling time
    $$
        T_{x,y}^S:=\inf\{k\ge 1\::\:S_k=S_k'\}
        \quad\text{is a.s.\ finite}.
    $$
    Without loss of generality, we can assume that $S_k=S_k'$ for $k\ge T_{x,y}^S$, and that $(\xi_i')_{i\geq 1}$ is independent of $(N_t)_{t\geq 0}$. Define
    $$
        L_t'=\sum_{i=0}^{N_t}\xi_i',\qquad t\ge0.
    $$
    Then $L' = (L_t')_{t\geq 0}$ is also a compound Poisson process with L\'{e}vy measure
    $\nu$ and starting point $L'_0=y$. In order to show that $L$ has the coupling property, it is enough to verify that
    \begin{equation}\label{com2}
        T_{x,y}^L:=\inf\{t>0\::\:L_t=L_t'\}<\infty.
    \end{equation}
    We claim that
    \begin{equation}\label{com3}
        T_{x,y}^L= K_{x,y}
        \quad\text{with}\quad K_{x,y}:=\inf\{t>0\::\:N_t\ge T_{x,y}^S\}.
    \end{equation}
    This implies \eqref{com2}. By assumption we know that for almost every $\omega$, $T_{x,y}^S(\omega)<\infty$. Since the Poisson process $N_t$ tends to infinity as $t\to\infty$, there exists $\tau_0(\omega)<\infty$ such that $N_{t}\ge T_{x,y}^S(\omega)$ for all $t\geq\tau_0(\omega)$. Therefore, \eqref{com3} tells us that $T_{x,y}^L\leq \tau_0 <\infty$.

    Let us finally prove \eqref{com3}. For this argument we assume that $\omega$ is fixed. Let $t>0$ be such that $N_t\ge T_{x,y}^S$, i.e.\ $t\geq K_{x,y}$. Since $S_k=S_k'$ for $k\ge T_{x,y}^S$, $S_{N_t}=S'_{N_t}$ and, by construction, $L_t=L_t'$; thus, $T_{x,y}^L\le t$ and since $t\geq K_{x,y}$ was arbitrary, we have $T^L_{x,y}\le K_{x,y}$. On the other hand, assume that $K_{x,y}>0$. Then, by the very definition of $K_{x,y}$, for any $\varepsilon>0$, there exists $t_\varepsilon>0$ such that $t_\varepsilon>K_{x,y}-\varepsilon$ and $N_{t_\varepsilon}\le T_{x,y}^S-1$. Hence, $S_{N_{t_\varepsilon}}\neq S'_{N_{t_\varepsilon}}$, i.e.\ $L_{t_\varepsilon}\neq L'_{t_\varepsilon}$. Therefore, $T_{x,y}^L\ge t_\varepsilon>K_{x,y}-\varepsilon$. Letting
    $\varepsilon\rightarrow0$, we get $T_{x,y}^L\ge K_{x,y}$ and the proof is complete.
\end{proof}

Note that the proof of Proposition \ref{wcp} already gives an estimate for the rate at which coupling occurs: for all $t>0$
$$
    \Pp(T_{x,y}^L>t)
    =\Pp(N_t<T_{x,y}^S)
    =e^{-\lambda t}\bigg[1+\sum_{k=1}^\infty \Pp(T_{x,y}^S>k)\frac{(\lambda t)^k}{k!}\bigg].
$$
What remains to be done is to get estimates for the coupling time of
the random walk $S$, $\Pp(T_{x,y}^S>k), k\ge1$. This requires
concrete coupling constructions for $S$; the most interesting random
walk couplings rely on a suitable coupling of the steps $\xi_j$ and
$\xi'_j$ of $S$ and $S'$, respectively. In the next section we will,
therefore, consider the Mineka and Lindvall-Rogers couplings.

\medskip
We close this section with two comments on  Proposition \ref{wcp}.
\begin{remark}
    (1) Theorem \ref{ex2} below will show, that the converse of Proposition \ref{wcp} is also true: \emph{Let $L=(L_t)_{t\geq 0}$ be a compound Poisson process with L\'{e}vy measure $\nu$, and $S=(S_n)_{n\geq 0}$, $S_n = S_0 + \xi_1 + \ldots + \xi_n$, be a random walk where $(\xi_i)_{i\geq 1}$ are iid random variables with $\xi_1\sim \nu_0:=\nu/\nu(\R^d)$. If $L$ has the coupling property so does $S$.}

    (2) Proposition \ref{wcp} can be easily generalized to more general settings, see also \cite{BSW}. More precisely, let $(X_t)_{t\geq 0}$ be a Markov process on $\R^d$ and let $(S_t)_{t\geq 0}$ be a subordinator (i.e.\ an increasing L\'evy process) which is independent of $X_t$. If $X_t$ has the coupling property and if $S_t$ tends to infinity as $t\to\infty$, then the subordinate process $X_{S_t}$ also has the coupling property.
\end{remark}

\section{The Mineka and Lindvall-Rogers Couplings --- A Review}\label{section3}

Let $S=(S_n)_{n\geq 1}$, $S_n=\xi_1+\ldots+\xi_n$ be a random walk on $\R^d$ with iid steps $(\xi_i)_{i\geq 0}$ such that $\xi_1\sim\nu_0$. The main result of this section is

\begin{theorem}\label{rw}
Suppose that for some $\delta>0$,
\begin{equation}\label{rw1}
    \eta_0
    =\eta_0(\delta)
    :=\inf_{x\in\R^d,|x|\le \delta}\nu_0\wedge (\delta_x*\nu_0)(\R^d)>0.
\end{equation}
Then there exists a constant $C:=C(\delta,\eta_0)>0$ such that for all $x, y\in\R^d$ and $n\ge1$,
\begin{equation}\label{rw2}
    \|\Pp(x+S_n\in \cdot)-\Pp(y+S_n\in \cdot)\|_{\var}
    \le \frac{C(1+|x-y|)}{\sqrt{n}}.
\end{equation}
\end{theorem}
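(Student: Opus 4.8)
The plan is to combine the coupling inequality with a Mineka-type coupling of the two random walks $S^x=(x+S_n)_{n\ge0}$ and $S^y=(y+S_n)_{n\ge0}$, and then to estimate the tail of the coupling time $T:=\inf\{n\ge0:S^x_n=S^y_n\}$ by a reflection-principle argument. We may assume $x\neq y$. The key is to engineer the coupling so that the difference $D_n:=S^x_n-S^y_n$ stays on the one-dimensional line $\R u$, where $u:=(x-y)/|x-y|$, and moreover on a lattice in that line which contains the origin, so that $D_n$ is a recurrent, lazy, nearest-neighbour random walk that hits $0$ almost surely, at a rate we can control explicitly in terms of $|x-y|$.

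For the construction, set $k:=\lceil|x-y|/\delta\rceil$ (a positive integer) and $h:=|x-y|/k$, so that $0<h\le\delta$ and $D_0=|x-y|\,u=khu$ lies on the lattice $h\Z\,u$. Since the shifts $hu$ and $-hu$ have norm $h\le\delta$, assumption \eqref{rw1} gives $(\nu_0\wedge(\delta_{hu}*\nu_0))(\R^d)\ge\eta_0$ and $(\nu_0\wedge(\delta_{-hu}*\nu_0))(\R^d)\ge\eta_0$. Choose a measure $\sigma$ with $\sigma\le\frac{1}{4}\,(\nu_0\wedge(\delta_{-hu}*\nu_0))$ and $\sigma(\R^d)=\eta_0/4$, and put $\rho^-:=\sigma$ and $\rho^+:=\delta_{hu}*\sigma$. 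One checks that $\rho^{\pm}\le\nu_0\wedge(\delta_{\pm hu}*\nu_0)$, that $\rho^++\rho^-\le\nu_0$, and --- this is the crucial point --- that $\delta_{-hu}*\rho^+=\sigma=\rho^-$ and $\delta_{hu}*\rho^-=\rho^+$, so that the positive measure $\nu_0-\rho^+-\rho^-$ is simultaneously the ``$\xi$-leftover'' $\nu_0-\rho^+-\rho^-$ and the ``$\xi'$-leftover'' $\nu_0-\delta_{-hu}*\rho^+-\delta_{hu}*\rho^-$. This allows us to couple a pair $(\xi,\xi')$ with $\xi\sim\xi'\sim\nu_0$ as follows: with probability $\eta_0/4$ let $\xi$ have law $(\eta_0/4)^{-1}\rho^+$ and set $\xi'=\xi-hu$; with probability $\eta_0/4$ let $\xi$ have law $(\eta_0/4)^{-1}\rho^-$ and set $\xi'=\xi+hu$; with the remaining probability $1-\eta_0/2$ let $\xi$ have law $(1-\eta_0/2)^{-1}(\nu_0-\rho^+-\rho^-)$ and set $\xi'=\xi$. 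In every case $\xi-\xi'\in\{hu,-hu,0\}$. Running $S^x$ and $S^y$ with increments coupled in this way at each step with $D_n\neq0$, and with $\xi'_{n+1}=\xi_{n+1}$ once $D_n=0$ (so the walks stay together thereafter), yields a Markovian coupling for which $D_n=hG_n\,u$, where $(G_n)$ is a lazy nearest-neighbour random walk on $\Z$ started at $k$, absorbed at $0$, with holding probability $1-\eta_0/2$; in particular $T=\inf\{n:G_n=0\}$.

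It remains to estimate $\Pp(T>n)=\Pp(\tau_{-k}>n)$, where $\tau_{-k}$ is the first hitting time of $-k$ by the lazy walk started at $0$. Writing this lazy walk as $\widehat M_{N_n}$ with $\widehat M$ a simple (non-lazy) random walk on $\Z$ and $N_n$ an independent $\mathrm{Binomial}(n,\eta_0/2)$-distributed number of moves, the reflection principle gives $\Pp(\tau_{-k}>m)\le\Pp(|\widehat M_m|\le k)\le C(k+1)/\sqrt m$ for all $m\ge1$, while a Chernoff bound $\Pp(N_n<\eta_0 n/4)\le e^{-cn}$ removes the laziness; altogether $\Pp(T>n)\le C(\delta,\eta_0)\,(k+1)/\sqrt n$. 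Since $k+1\le|x-y|/\delta+2\le C(\delta)(1+|x-y|)$, the coupling inequality $\|\Pp(x+S_n\in\cdot)-\Pp(y+S_n\in\cdot)\|_{\var}\le2\,\Pp(T>n)$ now gives \eqref{rw2}.

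The main obstacle is the coupling construction itself: one must simultaneously keep $D_n$ confined to the line $\R u$ (this is essential, since an $\R^d$-valued random walk with $d\ge2$ is transient and never returns to a point), keep it on a lattice containing $0$, and arrange that on the ``no-move'' event the difference is actually frozen, not merely that the two marginals are correct. The line-confinement forces the one-dimensional reduction; the lattice-through-$0$ requirement is precisely what dictates the $|x-y|$-adapted step size $h=|x-y|/\lceil|x-y|/\delta\rceil$; and the freezing is what the special choice $\rho^+=\delta_{hu}*\sigma$, $\rho^-=\sigma$ is engineered to achieve, since it makes the two ``leftover'' measures literally coincide. A more naive two-phase scheme --- first use a Mineka reflection coupling to bring $D_n$ into the ball of radius $\delta$, then try to close the gap in one step --- is awkward in this generality, because on a failed closing attempt the gap can move by an arbitrarily large amount ($\nu_0$ need have neither bounded support nor a finite first moment), a difficulty the lattice construction sidesteps.
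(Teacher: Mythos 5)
Your proof is correct, and it reaches \eqref{rw2} by a route that differs from the paper's in one structural respect. The paper first proves the uniform bound for $|x-y|\le\delta$ (Proposition \ref{time}) --- after rotating so that the displacement points along $e_1$ (Lemma \ref{rm1}), it runs exactly the kind of Mineka coupling you describe, with step-difference supported on $\{-|a|e_1,0,|a|e_1\}$, and estimates the hitting time of a single lattice step by the reflection principle and the CLT --- and then obtains the general case by chaining: it interpolates $x=x_0,x_1,\dots,x_k=y$ with $k=[\,|x-y|/\delta\,]+1$ and $|x_i-x_{i-1}|\le\delta$, and sums the $k$ local estimates by the triangle inequality, which is where the factor $1+|x-y|$ comes from. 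You instead build one coupling for arbitrary $x,y$ by tuning the lattice spacing to $h=|x-y|/\lceil|x-y|/\delta\rceil\le\delta$, so that the difference walk starts $k$ lattice sites from the origin and the factor $1+|x-y|$ emerges from the bound $\Pp(\tau_{-k}>m)\le C(k+1)/\sqrt{m}$ for the hitting time of level $-k$; your de-lazification via a Binomial number of moves plus a Chernoff bound replaces the paper's direct CLT estimate for the lazy walk. The two Mineka constructions are essentially the same object --- the paper's shift measures $\tfrac12(\nu\wedge\nu_{\mp|a|})$ are exact translates of one another by $\pm|a|e_1$, which is precisely the ``leftovers coincide'' property you engineer with $\rho^-=\sigma$, $\rho^+=\delta_{hu}*\sigma$ (your factor $\tfrac14$ only affects the constant). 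What your variant buys is a single genuine coupling of the two walks with a quantified tail $\Pp(T>n)\le C(1+|x-y|)/\sqrt{n}$ for all starting points, slightly more than the total variation bound itself; what the paper's chaining buys is modularity (only the $\delta$-local estimate needs a coupling construction) and it avoids the lattice-spacing adjustment. Both yield the same order of dependence on $n$ and on $|x-y|$, and your final bookkeeping ($k+1\le C(\delta)(1+|x-y|)$, the factor $2$ in the coupling inequality matching the paper's normalization $\|\mu-\nu\|_{\var}=2\sup_A|\mu(A)-\nu(A)|$) is accurate.
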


The proof of Theorem \ref{rw} is mainly based on Mineka's coupling
\cite{M}, see also \cite[Chapter II, Section 14, Page 44---Page
47]{Li}, and the coupling argument of the zero-two law for random
walks proved in \cite[Proposition~1]{LR} by Lindvall and Rogers.
These papers do not contain an estimate as explicit as \eqref{rw2}.
Therefore we decided to include a detailed proof on our own which
again highlights the role of the sufficient condition \eqref{rw1}.

\medskip

We begin with an auxiliary result which describes the total variation norm of a signed measure under a non-degenerate linear transformation.
\begin{lemma}\label{rm1}
    Let $\mu$ be a probability measure $\mu$ on $\Rd$. Then we have for all $x, y\in\R^d$
    $$
        \|\delta_x\ast \mu-\delta_y\ast \mu\|_{\var}
        =\|\delta_{x-y}\ast \mu- \mu\|_{\var}
        =\|\delta_{|x-y|e_1}\ast (\mu\circ R_{x-y}^{-1})-\mu\circ R_{x-y}^{-1}\|_{\var},
    $$
    where $e_1=(1,0,\cdots,0)\in \Rd$ and $R_a$ is a non-degenerate rotation such that $R_a a=|a|e_1$. In particular, for any $a\in\R^d$,
    $$
        \|\delta_{a}\ast \mu- \mu\|_{\var}
        =\|\delta_{-a}\ast \mu-\mu\|_{\var}.
    $$
\end{lemma}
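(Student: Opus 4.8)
The plan is to reduce everything to the two elementary facts that translation and rotation act isometrically on the total variation norm. First I would observe that total variation norm is invariant under any measurable bijection $T$ with measurable inverse: for a signed measure $\sigma$ one has $\|\sigma\circ T^{-1}\|_{\var}=\|\sigma\|_{\var}$, since the Jordan--Hahn decomposition of $\sigma$ is carried by $T$ to that of $\sigma\circ T^{-1}$ (the positive/negative sets transform accordingly). Applying this with $\sigma=\delta_x\ast\mu-\delta_y\ast\mu$ and with $T$ the translation $z\mapsto z-y$ gives the first equality, because $(\delta_x\ast\mu)\circ T^{-1}=\delta_{x-y}\ast\mu$ and $(\delta_y\ast\mu)\circ T^{-1}=\mu$.

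For the second equality I would apply the same invariance principle with $T=R_{x-y}$, the non-degenerate rotation sending $a:=x-y$ to $|a|e_1$. Since $R_a$ is linear, it intertwines translation and convolution by a Dirac mass: $(\delta_a\ast\mu)\circ R_a^{-1}=\delta_{R_a a}\ast(\mu\circ R_a^{-1})=\delta_{|a|e_1}\ast(\mu\circ R_a^{-1})$, and $(\mu)\circ R_a^{-1}=\mu\circ R_a^{-1}$. Hence $\|\delta_{a}\ast\mu-\mu\|_{\var}=\|\delta_{|a|e_1}\ast(\mu\circ R_a^{-1})-\mu\circ R_a^{-1}\|_{\var}$, which is the second equality. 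One should note the degenerate case $a=0$ (then both sides are $0$ and any choice of $R_0$ works) to keep the statement literally correct.

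For the final ``in particular'' claim I would again use invariance, this time under the reflection $T\colon z\mapsto -z$. Then $(\delta_a\ast\mu)\circ T^{-1}=\delta_{-a}\ast(\mu\circ T^{-1})$ and $\mu\circ T^{-1}=\check\mu$, the reflected measure; so $\|\delta_a\ast\mu-\mu\|_{\var}=\|\delta_{-a}\ast\check\mu-\check\mu\|_{\var}$. This is not yet symmetric in $\pm a$ unless $\mu$ is symmetric, so instead I would derive it directly from the already-proved second equality: by that formula $\|\delta_a\ast\mu-\mu\|_{\var}$ depends on $a$ only through $|a|$ (and through $\mu$), and since $|a|=|-a|$ the two quantities coincide. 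Alternatively, one can use that $\|\sigma\|_{\var}=\|-\sigma\|_{\var}=\|\,(\delta_{-a})\ast(\delta_a\ast\mu-\mu)\,\|_{\var}=\|\mu-\delta_{-a}\ast\mu\|_{\var}$, where the middle step uses translation invariance with $T\colon z\mapsto z-a$.

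There is no real obstacle here; the only point requiring a line of care is the invariance statement $\|\sigma\circ T^{-1}\|_{\var}=\|\sigma\|_{\var}$ for a signed measure, which I would justify by noting that if $\R^d=P\cup N$ is a Hahn decomposition for $\sigma$, then $T(P)\cup T(N)$ is one for $\sigma\circ T^{-1}$, so $(\sigma\circ T^{-1})^{\pm}=\sigma^{\pm}\circ T^{-1}$ and the total masses are preserved. Everything else is a direct substitution using the identity $(\delta_a\ast\mu)\circ T^{-1}=\delta_{Ta}\ast(\mu\circ T^{-1})$ valid for linear $T$.
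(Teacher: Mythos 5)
Your main argument is correct and is essentially the paper's: both proofs reduce everything to the invariance of the total variation norm under bijective transformations (the paper computes $\|\cdot\|_{\var}$ as $2\sup_{A\in\Bb(\Rd)}|\cdot|$ and substitutes $B=A-(x-y)$ resp.\ $B=R_aA$, you push forward the Hahn decomposition --- the same fact in different clothing), together with the identity $(\delta_a\ast\mu)\circ T^{-1}=\delta_{Ta}\ast(\mu\circ T^{-1})$ for linear $T$.

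One step you propose is, however, genuinely wrong and you should strike it: the claim that, because of the second equality, $\|\delta_a\ast\mu-\mu\|_{\var}$ ``depends on $a$ only through $|a|$''. The right-hand side of that equality is $\|\delta_{|a|e_1}\ast(\mu\circ R_a^{-1})-\mu\circ R_a^{-1}\|_{\var}$, and the pushed-forward measure $\mu\circ R_a^{-1}$ still depends on the \emph{direction} of $a$; nothing cancels. Concretely, in $d=2$ take $\mu$ to be normalized Lebesgue measure on the segment $[0,2]\times\{0\}$: then $\|\delta_{e_1}\ast\mu-\mu\|_{\var}=1<2=\|\delta_{e_2}\ast\mu-\mu\|_{\var}$, so the norm is not a function of $|a|$ alone. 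Fortunately your fallback argument is sound and is all you need: $\|\delta_a\ast\mu-\mu\|_{\var}=\|\delta_{-a}\ast(\delta_a\ast\mu-\mu)\|_{\var}=\|\mu-\delta_{-a}\ast\mu\|_{\var}=\|\delta_{-a}\ast\mu-\mu\|_{\var}$; equivalently, apply the already-proved first equality with $x=0$, $y=a$ and use that the total variation norm is symmetric in its two arguments. Keep that version and delete the ``only through $|a|$'' sentence.
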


\begin{proof}
    Using the definition of the total variation norm we get
    \begin{align*}
        \|\delta_x\ast \mu-\delta_y\ast \mu\|_{\var}
        &= 2\sup_{A\in \Bb(\R^d)}\big|\delta_x*\mu(A)-\delta_y*\mu(A)|\\
        &= 2\sup_{A\in \Bb(\R^d)}|\mu(A-x)-\mu(A-y)|\\
        &= 2\sup_{B\in \Bb(\R^d)}|\mu(B-(x-y))-\mu(B)|\\
        &= \|\delta_{x-y}\ast \mu- \mu\|_{\var}.
    \end{align*}
    Now let $a\in\R^d$ and denote by $R_a$ the rotation such that $R_a a = |a|e_1$. Clearly,
    $$
        \mu \circ R_{a} (A)
        =\mu\{R_a x\in\R^d\::\: x\in A\}
        =\mu\{y\in\R^d: R_a^{-1}(y)\in A\},
        \quad A\in\mathscr B(\Rd).
    $$
    Then,
    \begin{align*}
        \|\delta_a\ast \mu- \mu\|_{\var}
        &= 2\sup_{A\in \Bb(\R^d)}\big|\mu(A-a)-\mu(A)|\\
        &= 2\sup_{A\in \Bb(\R^d)}\big|\mu\circ R_a^{-1}(R_a(A-a))-\mu\circ R_a^{-1}(R_aA)\big|\\
        &= 2\sup_{A\in \Bb(\R^d)}|\mu\circ R_a^{-1}(R_aA-|a|e_1)-\mu\circ R_a^{-1}(R_aA)|\\
        &= 2\sup_{B\in \Bb(\R^d)}|\mu\circ R_a^{-1}(B-|a|e_1)-\mu\circ R_a^{-1}(B)|\\
        &= \|\delta_{|a|e_1}\ast(\mu \circ R^{-1}_{a})-\mu \circ R^{-1}_{a}\big\|_{\var}.
    \qedhere
    \end{align*}
\end{proof}

\begin{proposition}\label{time}
Under \eqref{rw1}, there exists a constant $C=C(\eta_0)>0$ such that
\begin{equation}\label{rw3}
    \sup_{|x-y|\le \delta} \|\Pp(x+S_n\in \cdot)-\Pp(y+S_n\in \cdot)\|_{\var}
    \le \frac{C}{\sqrt{n}}.
\end{equation}
\end{proposition}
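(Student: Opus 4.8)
The plan is to realise the random walk coupling explicitly via a Mineka-type construction and then estimate its coupling time. Fix $x,y$ with $|x-y|\le\delta$, and set $a:=x-y$. By Lemma \ref{rm1} we may rotate so that, without loss of generality, the relevant displacement points in the direction $e_1$; moreover $|a|\le\delta$. Condition \eqref{rw1} says that for every such small displacement the measures $\nu_0$ and $\delta_a\ast\nu_0$ have a common component of mass at least $\eta_0>0$. This is exactly what is needed to run the Mineka coupling: at each step $j$ we couple the increments $\xi_j$ and $\xi_j'$ so that, with probability at least $\eta_0$, the difference $D_n:=(x+S_n)-(y+S_n')$ of the two walks performs a symmetric step of the form $\pm a$ (i.e.\ the coupled increments satisfy $\xi_j=\xi_j'+a$ or $\xi_j'=\xi_j-a$ with equal probability on the common part), while on the complementary event the two walks take identical steps and $D_n$ is unchanged. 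Thus the projection $\langle D_n,e_1\rangle/|a|$ is dominated by (equals, up to the frozen steps) a lazy symmetric random walk on $\Z$ that moves at each time with probability $\eta_0$, started from $|a|/|a|=1$ or more precisely from the value $1$ after the first reduction; once this $\Z$-valued walk hits $0$, the two walks have met and we freeze them together.

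The key steps, in order, are: (i) invoke Lemma \ref{rm1} to reduce to a one-dimensional displacement along $e_1$; (ii) write down the Mineka coupling of the increments using the common component guaranteed by \eqref{rw1}, so that the scalar process $Z_n:=\langle D_n,e_1\rangle$ evolves as $|a|$ times a lazy symmetric walk on the lattice $|a|\Z$ (or, more carefully, on a lattice containing the initial value) with holding probability $1-\eta_0$; (iii) bound the coupling time $T^S_{x,y}=\inf\{n:Z_n=0\}$ by the hitting time of $0$ for this lazy walk; (iv) apply the standard local-central-limit / reflection estimate for symmetric one-dimensional random walks, which gives $\Pp(\text{hitting time}>n)\le C(\eta_0)\,n^{-1/2}$ uniformly over all starting values bounded by $\delta$ — this is where the restriction $|x-y|\le\delta$ is used, and the uniformity over $|x-y|\le\delta$ yields a single constant $C=C(\eta_0)$; (v) conclude via $\|\Pp(x+S_n\in\cdot)-\Pp(y+S_n\in\cdot)\|_{\var}\le 2\Pp(T^S_{x,y}>n)$.

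The main obstacle is a bookkeeping subtlety rather than a deep one: the scalar process $Z_n$ does not live on a fixed lattice with a fixed unit, because the displacement $a=x-y$ varies with $x,y$, so one must arrange the reflection/hitting-time estimate to be genuinely uniform in the step size and the starting point over the range $|a|\le\delta$. This is handled by noting that the lazy symmetric walk with steps $\pm|a|$ started from $|a|$ is, after rescaling by $|a|$, a lazy simple symmetric walk on $\Z$ started at $1$, whose return-to-$0$ probabilities are the classical ones; the holding probability $1-\eta_0$ only slows time down by a factor $\eta_0^{-1}$, which is absorbed into $C(\eta_0)$. One should also be slightly careful that the Mineka coupling keeps $Z_n$ on a fixed coset of $|a|\Z$ so that $Z_n=0$ is actually attainable — this is automatic since each coupled step changes $Z_n$ by $0$ or $\pm|a|$ and $Z_0\in|a|\Z$. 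Once these points are made precise, \eqref{rw3} follows immediately.
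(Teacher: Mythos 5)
Your proposal is correct and follows essentially the same route as the paper: reduce via Lemma \ref{rm1} to a displacement $|a|e_1$ with $|a|\le\delta$, run the Mineka coupling on the increments using the common component of mass at least $\eta_0$ guaranteed by \eqref{rw1}, observe that the difference of the coupled walks is a lazy symmetric walk on $|a|\Z$ with holding probability at most $1-\eta_0$, and bound the hitting time of the meeting level by the reflection principle together with a (local) central limit estimate, which yields the uniform $C(\eta_0)/\sqrt{n}$ bound. The bookkeeping points you flag (the coset of $|a|\Z$ and the uniformity in $|a|\le\delta$ after rescaling) are exactly the ones the paper handles in its Steps 2 and 3.
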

\begin{proof}
\emph{Step 1.}
It is easy to see that for any $a\in\R^d$ and any probability measure $\mu$,
$$
    \mu^{*n}\circ R_a^{-1}=(\mu \circ R_a^{-1})^{*n}.
$$
Lemma \ref{rm1} shows that \eqref{rw3} is equivalent to the following estimate
\begin{equation}\label{rw3a}
    \sup_{|a|\le \delta} \|\Pp(|a|e_1+S_{a,n}\in \cdot)-\Pp(S_{a,n}\in \cdot)\|_{\var}
    \le \frac{C}{\sqrt{n}}.
\end{equation}
Here, $S_{a,n}$ is a random walk in $\R^d$ with iid steps $\xi_{a,1}, \xi_{a,2}, \ldots$ and $\xi_{a,1} \sim \nu_0\circ R_a^{-1}$.

On the other hand, Lemma \ref{rm1} also shows that
\begin{align*}
    (\nu_0\circ R_a^{-1}&)\wedge (\delta_{|a|e_1}*(\nu_0\circ R_a^{-1}))(\R^d)\\
    &=1-\frac 12\,\|\nu_0\circ R_a^{-1}-(\delta_{|a|e_1}*(\nu_0\circ R_a^{-1}))\|_{\var}\\
    &=1-\frac 12\,\|\nu_0-(\delta_{a}*\nu_0)\|_{\var}\\
    &=\nu_0\wedge(\delta_{a}*\nu_0)(\R^d).
\end{align*}
Therefore, \eqref{rw1} implies that for any $a\in\R^d$
\begin{equation}\label{rw4}
    \inf_{|a|\le \delta}\left\{(\nu_0\circ R_a^{-1})\wedge (\delta_{|a|e_1}*(\nu_0\circ R_a^{-1}))(\R^d)\right\}
    = \inf_{|a|\le \delta}\left\{\nu_0\wedge (\delta_a*\nu_0)(\R^d)\right\}
    >0.
\end{equation}
In order to simplify the notation, we use $\nu:=\nu_0\circ R_a^{-1}$ and $S_n:=S_{a,n}$. With this notation \eqref{rw3a} becomes
\begin{equation}\label{rw51}
    \sup_{|a|\le \delta} \|\Pp(|a| e_1+S_n\in \cdot)-\Pp(S_n\in \cdot)\|_{\var}
    \le \frac{C}{\sqrt{n}}.
\end{equation}

\medskip
\emph{Step 2.} Assume that $|a|\in (0,\delta]$ and set $\nu_{|a|}=\delta_{|a| e_1}*\nu$ and $\nu_{-|a|}=\delta_{-|a| e_1}*\nu$. Let $(\xi,\Delta\xi)\in \Rd \times \Rd$ be a pair of random variables with the following distribution
$$
    \Pp\big((\xi,\Delta\xi)\in A\times D\big)
    =
    \begin{cases}
    \qquad \frac 12 (\nu\wedge\nu_{-|a|})(A), & D=\{|a| e_1\};\\
    \qquad \frac 12 (\nu\wedge\nu_{|a|})(A),       & D= \{-|a| e_1\};\\
    \big(\nu- \frac 12 (\nu\wedge\nu_{-|a|}+\nu\wedge\nu_{|a|})\big)(A), & D=\{0\};
    \end{cases}
$$
where $A\in\Bb(\R^d)$ and $D\in\big\{ \{-|a| e_1\}, \{0\}, \{|a| e_1\}\big\}$. We see from \eqref{rw4} that
$$
    \Pp(\Delta\xi=|a| e_1)
    = \frac{1}{2}\big(\nu\wedge\big(\delta_{-|a| e_1}*\nu)\big)(\R^d)
    \ge \frac{1}{2}\inf_{|a|\le \delta}\nu_0\wedge (\delta_a*\nu_0)(\R^d).
$$
By Lemma \ref{rm1},
\begin{align*}
    \Pp(\Delta\xi=-|a| e_1)
    &=\frac{1}{2}\big(\nu\wedge\big(\delta_{|a| e_1}*\nu)\big)(\R^d)\\
    &=\frac{1}{2}\big(\nu\wedge\big(\delta_{-|a| e_1}*\nu)\big)(\R^d)\\
    &=\Pp(\Delta\xi=|a| e_1).
\end{align*}

It is clear that the distribution of $\xi$ is $\nu$. Let $\xi'=\xi+\Delta\xi$. We claim that the distribution of $\xi'$ is also $\nu$. Indeed, for any $A\in\mathscr{B}(\R^d)$,
\begin{align*}
    \Pp(\xi'\in A)
    &=\Pp(\xi-|a| e_1\in A, \Delta\xi=-|a| e_1)\\
    &\qquad + \Pp(\xi+|a| e_1\in A, \Delta\xi=|a| e_1)+\Pp(\xi \in A, \Delta\xi=0)\\
    &= \frac{\delta_{-|a| e_1}*(\nu\wedge\nu_{|a|})(A)}2 + \frac{\delta_{|a| e_1}*(\nu\wedge\nu_{-|a|})(A)}2\\
    &\qquad  + \left(\nu- \frac{\nu\wedge\nu_{-|a|}+\nu\wedge\nu_{|a|}}2\right)(A)\\
    &=\mu(A),
\end{align*}
where we have used that $\delta_{-|a|
e_1}*(\nu\wedge\nu_{|a|})=\nu\wedge\nu_{-|a|}$ and $\delta_{|a|
e_1}*(\nu\wedge\nu_{-|a|})=\nu\wedge\nu_{|a|}$. Now we construct the
coupling $(S_n,S_n')$ of $S_n$ with the iid pairs $(\xi_i,\xi'_i),
i\ge1$ where $(\xi_1,\xi_1') \sim (\xi,\xi')$. Since
$\xi'_i-\xi_i=\Delta\xi$, we know that $\xi_i-\xi'_i$ is, for all
$i\geq 1$, symmetrically distributed, takes only the values $-|a|
e_1$, $0$ and $|a| e_1$. Because of \eqref{rw4}, we have

\begin{equation*}\label{rw5}\begin{aligned}
    P(a)
    :=\Pp(\xi^{1\,\prime}_i-\xi^1_{i}=0)
    &= \big(\nu-\tfrac 12\,(\nu\wedge\nu_{-|a|}+\nu\wedge\nu_{|a|})\big)(\R^d)\\
    &= 1-\nu\wedge\nu_{-|a|}(\R^d)\\
    &\leq 1-\inf_{|a|\le \delta}\nu_0\wedge (\delta_a*\nu_0)(\R^d)\\
    &=: \gamma(\delta)<1,
\end{aligned}\end{equation*}
where $\xi_i=(\xi^1_{i},\xi^2_{i},\cdots,\xi^d_{i})$ and
$\xi'_i=(\xi^{1\,\prime}_{i},\xi^{2\,\prime}_{i},\cdots,\xi^{d\,\prime}_{i})$.
Set $S^j_{n}=\sum_{i=1}^n\xi^j_{i}$ and
$S^{j\,\prime}_{n}=\sum_{i=1}^n\xi^{j\,\prime}_{i}$. We observe that
$(S^1-S^{1\,\prime})$ is a random walk, whose step sizes are $-|a|$,
$0$ and $|a|$ with  probability $\frac 12(1-P(a))$, $P(a)$ and
$\frac 12(1-P(a))$, respectively. Since $S^j_{n}=S^{j\,\prime}_{n}$
for $2\le j\le d$, we get
\begin{equation}\label{rw6}
    \|\Pp(|a| e_1+S_n\in \cdot)-\Pp(S_n\in \cdot)\|_{\var}
    \le 2\Pp(T^S>n),
\end{equation}
where
$$
    T^S=\inf\{k\ge1\::\: S^1_{k}=S^{1\,\prime}_{k}+|a|\}.
$$

\medskip
\emph{Step 3.}
We will now estimate $\Pp(T^S>n)$. Let $V_1, V_2, \ldots$ be iid symmetric random variables, whose common distribution is given by
$$
    \Pp(V_i=x)
    = \begin{cases}
        \frac 12(1-P(a)),     &x=-|a|;\\
        \frac 12(1-P(a)),     &x=|a|;\\
        \qquad P(a),    &x=0.
    \end{cases}
$$
Define $Z_n=\sum_{i=1}^n V_i$. We have seen in Step 2 that
$T^S=\inf\{n\ge 1\::\: Z_n=|a|\}$. Then, by the reflection
principle,
\begin{align*}
    \Pp(T^S>n)
    =\Pp\left(\max_{k\leq n} Z_k < |a|\right)
    \leq 2\,\Pp\left(0 \leq Z_n \leq |a|\right).
\end{align*}
Since $Z$ is the sum of iid random variables with mean $0$ and variance $\sigma^2 = |a|^2 (1-P(a))$, we can use the central limit theorem to deduce, for sufficiently large values of $n$,
\begin{align*}
    \Pp(T^S>n)
    &= 2\,\Pp\left(0\leq \frac{Z_n}{|a|\sqrt{1-P(a)}\sqrt n} \leq \frac 1{\sqrt{1-P(a)}\sqrt n}\right)\\
    &\leq 2\,\Pp\left(0\leq \frac{Z_n}{|a|\sqrt{1-P(a)}\sqrt n} \leq \frac 1{\sqrt{1-\gamma(\delta)}\sqrt n}\right)\\
    &\leq \frac{C}{\sqrt{2\pi}} \int_{0}^{1/\sqrt{1-\gamma(\delta)}\sqrt n} e^{-x^2/2}\,dx\\
    &\leq \frac{C_{1,\gamma(\delta)}}{\sqrt n}.
\end{align*}
In the first inequality above we have used the fact that
$1-P(a)\ge 1-\gamma(\delta)>0.$
Therefore we find from
\eqref{rw6} for all large $n\geq 1$
$$
    \|\Pp(|a| e_1+S_n\in \cdot)-\Pp(S_n\in\cdot)\|_{\var}
    \le\frac{2C_{2, \gamma(\delta)}}{\sqrt{n}}.
$$
Since the right-hand side is bounded by $2$, this estimate actually
holds for all $n\geq 1$.

We can now use Lemma \ref{rm1} to get
$$
    \|\Pp(\pm|a| e_1+S_n\in \cdot)-\Pp(S_n\in\cdot)\|_{\var}
    \le\frac{2C_{2, \gamma(\delta)}}{\sqrt{n}}
$$
which immediately yields \eqref{rw51}, since $|a|\leq\delta$ was arbitrary.
\end{proof}

We close this section with the proofs of Theorems \ref{rw} and \ref{th2}.

\begin{proof}[Proof of Theorem $\ref{rw}$]
For any $x, y\in\R^d$, set $k=\big[\frac{|x-y|}{\delta}\big]+1$.
Pick $x_0, x_1, \ldots, x_k\in\Rd$ such that $x_0=x$, $x_k=y$ and
$|x_i-x_{i-1}|\le \delta$ for $1\le i\le k$. By Proposition
\ref{time},
\begin{align*}
    \|\Pp(x+S_n\in\cdot)-\Pp(y+S_n\in \cdot)\|_{\var}
    &\le \sum_{i=1}^k\|\Pp(x_i+S_n\in\cdot)-\Pp(x_{i-1}+S_n\in \cdot)\|_{\var}\\
    &\le \frac{C(\delta,\eta_0)(1+|x-y|)}{\sqrt{n}},
\end{align*}
which is what we claimed.
\end{proof}

\begin{proof}[Proof of Theorem $\ref{th2}$]
\emph{Step 1.} Assume first that the L\'evy triplet is of the form
$(0,0,\nu)$ and that the L\'evy measure satisfies $\lambda =
\nu(\Rd)<\infty$. This means that $X_t$ is compound Poisson process.
We use the notations from Section \ref{section2}. For all
$x\in\R^d$,
$$
    \nu\wedge (\delta_x*\nu)(\R^d)
    =\lambda\left[\nu_0\wedge (\delta_x*\nu_0)(\R^d)\right] > 0.
$$
Therefore, we can apply Proposition \ref{couplingo2} and Theorem \ref{rw} to get Theorem \ref{th2} in this case.

\medskip
\emph{Step 2.} If $(X_t)_{t\geq 0}$ is a general L\'{e}vy  process with L\'evy triplet $(b,Q,\nu)$, we split $X_t$ into two independent parts
$$
    X_t=X'_t+X''_t,
$$
where $X'_t$ is the compound Poisson process with L\'{e}vy triplet $(0,0,\nu_\varepsilon)$---$\nu_\varepsilon$ is defined in \eqref{eq:th2}---and $X''_t$ is the L\'evy process with triplet $(b,Q,\nu-\nu_\epsilon)$. Denote by $P'_t, P''_t$, $P'_t(x,dy), P''_t(x,dy)$ the transition semigroups and transition functions of the processes $X'_t$ and $X''_t$, respectively. Then, $P_t=P'_t P''_t$. Observe that $P''_t$ is a contraction semigroup, i.e.\ $\|P''_tf\|_\infty \leq 1$ whenever $\|f\|_\infty\leq 1$. Therefore,
\begin{align*}
    \|P_t(x,\cdot)-P_t(y,\cdot)\|_{\var}
    &= \sup_{\|f\|_\infty\le 1}\big|P_tf(x)-P_tf(y)\big|\\
    &=\sup_{\|f\|_\infty\le 1} \big|P_t'P_t''f(x)-P_t' P_t''f(y)\big|\\
    &\le \sup_{\|h\|_\infty\le 1} \big|P'_th(x)-P'_th(y)\big|\\
    &= \|P'_t(x,\cdot)-P'_t(y,\cdot)\|_{\var},
\end{align*}
which reduces the general case to the compound Poisson setting considered in the first part.
\end{proof}

\section{Extensions: The Lindvall-Rogers `Zero-Two' Law}\label{section4}
Motivated by Lindvall-Rogers's zero-two law for random walks
\cite[Propsotion 1]{LR}, we present a necessary and sufficient
condition for the coupling property of a L\'{e}vy process. We will
add a few simple sufficient criteria in terms of the L\'{e}vy
measure which are easy to verify. Throughout this section we assume
that $(X_t)_{t\geq 0}$ is a $d$-dimensional L\'evy process with
L\'evy measure $\nu\not\equiv 0$; as usual, $X_0=0$. By
$P_t(x,\cdot)$ and $P_t$ we denote the transition probability and
transition semigroup, respectively.

\begin{theorem}[Criterion for successful couplings]\label{ex1}
The following statements are equivalent:
\begin{itemize}
\item[(1)] The L\'{e}vy process $(X_t)_{t\geq 0}$ has the coupling property.
\item[(2)] There exists $t_0> 0$ such that for any $t\ge t_0$, the transition probability $P_t(x,\cdot)$ has (with respect to Lebesgue measure) an absolutely continuous component.
\end{itemize}
In either case, for every $x, y\in\R^d$, there exists a constant $C(x,y)>0$ such that
\begin{equation}\label{ex11}
    \|P_t(x,\cdot)-P_t(y,\cdot)\|_{\var}
    \le \frac{C(x,y)}{\sqrt{t}},\qquad t>0.
\end{equation}
\end{theorem}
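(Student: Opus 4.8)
The plan is to pass to a discrete skeleton and to combine the random‑walk estimate of Theorem~\ref{rw} with the Lindvall--Rogers zero--two law \cite[Proposition~1]{LR}. Fix any $t_0>0$; then $(X_{nt_0})_{n\ge0}$ is a random walk on $\Rd$ with i.i.d.\ steps of law $\mu_{t_0}:=P_{t_0}(0,\cdot)$, and $\mu_{t_0}^{*n}=\mu_{nt_0}=P_{nt_0}(0,\cdot)$ by stationarity and independence of increments. I will use two elementary facts repeatedly. First, convolution is a contraction for $\|\cdot\|_{\var}$ on signed measures, so writing $t=nt_0+r$ with $0\le r<t_0$ gives $P_t(x,\cdot)=\delta_x*\mu_{t_0}^{*n}*\mu_r$ and hence
\[
    \|P_t(x,\cdot)-P_t(y,\cdot)\|_{\var}
    \le\|\delta_x*\mu_{t_0}^{*n}-\delta_y*\mu_{t_0}^{*n}\|_{\var}
    =\|\Pp(x+S_n\in\cdot)-\Pp(y+S_n\in\cdot)\|_{\var},
\]
where $S_n=\xi_1+\dots+\xi_n$ with $\xi_i$ i.i.d.\ $\sim\mu_{t_0}$. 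Second, if a finite measure has a non‑trivial absolutely continuous component, so does its convolution with any probability measure; applied to $\mu_t=\mu_s*\mu_{t-s}$ this shows that for a L\'evy process ``$\mu_t$ has an absolutely continuous component'' is a monotone property of $t$, so that statement~(2) is equivalent to: ``$\mu_{t_0}^{*n}$ has a non‑trivial absolutely continuous component for all large $n$'' (for one, equivalently every, choice of $t_0$).

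For (2)$\Rightarrow$(1) together with \eqref{ex11}: let $g\,dx$ with $g\in L^1(\Rd)$, $g\ge0$, $g\not\equiv0$, be a non‑trivial absolutely continuous component of $\mu_{t_0}$. Since $\mu_{t_0}\ge g\,dx$ and $\delta_x*\mu_{t_0}\ge g(\cdot-x)\,dx$, monotonicity of $\mu\wedge\nu$ gives
\[
    \inf_{|x|\le\delta}\mu_{t_0}\wedge(\delta_x*\mu_{t_0})(\Rd)
    \ge\|g\|_{L^1}-\tfrac12\sup_{|x|\le\delta}\|g-g(\cdot-x)\|_{L^1}>0
\]
for $\delta>0$ small enough, by continuity of translation in $L^1$. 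Thus the step law $\mu_{t_0}$ fulfils hypothesis \eqref{rw1} (with $\mu_{t_0}$ in the role of $\nu_0$), and Theorem~\ref{rw} yields a constant $C>0$ with $\|\Pp(x+S_n\in\cdot)-\Pp(y+S_n\in\cdot)\|_{\var}\le C(1+|x-y|)/\sqrt n$ for all $n\ge1$. Combining this with the first display and $n=\lfloor t/t_0\rfloor\ge t/(2t_0)$ for $t\ge t_0$ (and the trivial bound $2$ for $t<t_0$) gives \eqref{ex11} with $C(x,y)$ a constant multiple of $1+|x-y|$. Finally \eqref{ex11} implies, by dominated convergence (the integrand $\|P_t(x,\cdot)-P_t(y,\cdot)\|_{\var}$ is bounded by $2$ and tends to $0$ pointwise), that $\|\mu_1P_t-\mu_2P_t\|_{\var}\to0$ for all $\mu_1,\mu_2\in\mathscr{P}(\Rd)$, i.e.\ the coupling property, by \eqref{prex1}.

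For (1)$\Rightarrow$(2): if $(X_t)_{t\ge0}$ has the coupling property then, by \eqref{prex1} with $\mu_1=\delta_x$ and $\mu_2=\delta_y$,
\[
    \|\Pp(x+S_n\in\cdot)-\Pp(y+S_n\in\cdot)\|_{\var}
    =\|P_{nt_0}(x,\cdot)-P_{nt_0}(y,\cdot)\|_{\var}\longrightarrow0
    \qquad(n\to\infty)
\]
for every $x,y\in\Rd$. By the Lindvall--Rogers zero--two law \cite[Proposition~1]{LR}, this forces $\mu_{t_0}^{*n}$ to possess a non‑trivial absolutely continuous component once $n$ is large; by the monotonicity noted in the first paragraph, $P_t(x,\cdot)=\delta_x*\mu_t$ then has an absolutely continuous component for all $t$ beyond some threshold, which is statement~(2).

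The step I expect to be the main obstacle is the clean invocation of \cite[Proposition~1]{LR}: one must verify that the dichotomy there --- for each fixed vector $x$, $\|\Pp(x+S_n\in\cdot)-\Pp(S_n\in\cdot)\|_{\var}$ either tends to $2$ or tends to $0$ --- combined with the present hypothesis that the limit is $0$ for \emph{every} $x\in\Rd$, actually produces a genuine absolutely continuous component of some convolution power of $\mu_{t_0}$, and not merely absolute continuity relative to Haar measure on a proper closed subgroup. This is also where a Steinhaus‑type argument is needed for the reverse bookkeeping: a non‑trivial absolutely continuous component has support of positive Lebesgue measure, so the closed subgroup it generates is all of $\Rd$, matching the requirement that $\{x:\|\Pp(x+S_n\in\cdot)-\Pp(S_n\in\cdot)\|_{\var}\to0\}=\Rd$. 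Everything else reduces to the contraction property of convolution in total variation and the semigroup identity $\mu_t=\mu_s*\mu_{t-s}$.
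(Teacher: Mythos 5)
Your proposal is correct and follows the same overall strategy as the paper: pass to a discrete skeleton using the fact that convolution with a probability measure is a contraction in total variation (equivalently, that $t\mapsto\|\delta_x*P_t-\delta_0*P_t\|_{\var}$ is decreasing), and then invoke the Lindvall--Rogers theory for the resulting random walk. Two points of comparison. First, for (1)$\Rightarrow$(2) the paper does not rely on the zero--two dichotomy of \cite[Proposition~1]{LR} alone; it cites the sharper statement \cite[Remark~(ii), p.~124--125]{LR}, or \cite[Ch.~3, Sec.~3, Thm.~3.9]{RU}, asserting that $\|\Pp(x+S_n\in\cdot)-\Pp(S_n\in\cdot)\|_{\var}\to0$ for \emph{every} $x\in\R^d$ if and only if the step law is spread out. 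That is exactly the point you flag as the ``main obstacle''; it is settled by quoting the right reference, so no separate Steinhaus-type argument is needed. Second, for the rate \eqref{ex11} your route is more explicit than the paper's: you show that a non-trivial absolutely continuous component $g\,dx$ of $\mu_{t_0}$ forces the Mineka condition \eqref{rw1} for the skeleton step law, via monotonicity of $\wedge$, the identity $\int\min\big(g,g(\cdot-x)\big)\,dz=\|g\|_{L^1}-\tfrac12\|g-g(\cdot-x)\|_{L^1}$ and $L^1$-continuity of translations, and then apply Theorem~\ref{rw} wholesale; the paper instead appeals somewhat loosely to ``the arguments used in the proof of Proposition~\ref{time} together with \cite[Proposition~1]{LR}''. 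Your derivation has the merit of making completely concrete the bridge between spread-outness and condition \eqref{rw1} (essentially the link between conditions (1) and (2) of Theorem~\ref{ex2}), and all the supporting computations check out, so I regard the proposal as a valid, and in the quantitative part slightly more self-contained, proof of the theorem.
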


If the L\'evy process has the strong Feller property, i.e.\ the corresponding semigroup maps $B_b(\Rd)$ into $C_b(\Rd)$, Theorem \ref{ex1} becomes particularly simple.
\begin{corollary}
    Suppose that there exists some $t_0>0$ such that the semigroup $P_{t_0}$ maps $B_b(\R^d)$ into $C_b(\R^d)$. Then, the L\'{e}vy process $X_t$ has the coupling property. In particular, every L\'{e}vy process which enjoys the strong Feller property has the coupling property.
\end{corollary}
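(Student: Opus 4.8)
My plan is to deduce this corollary directly from Theorem~\ref{ex1} by verifying its condition~(2). Since the strong Feller property asserts $P_t(B_b(\Rd))\subseteq C_b(\Rd)$ for every $t>0$, the ``in particular'' claim is just a special case of the first assertion, so I would only need to work with the stated hypothesis that $P_{t_0}(B_b(\Rd))\subseteq C_b(\Rd)$ for one fixed $t_0>0$. Throughout I would write $\mu_t$ for the law of $X_t$, so that $P_t(x,\cdot)=\delta_x*\mu_t$ and $P_tf(x)=\int f(x+z)\,\mu_t(dz)$, and let $m$ denote Lebesgue measure on $\Rd$.

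The heart of the argument is to show that the strong Feller property at time $t_0$ forces $\mu_{t_0}\ll m$. I would argue by contradiction: if the Lebesgue decomposition of $\mu_{t_0}$ has a non-trivial singular part $\mu_s$ carried by a Borel set $N$ with $m(N)=0$, then testing the semigroup against $f=\I_N\in B_b(\Rd)$ gives $P_{t_0}f(x)=\mu_{t_0}(N-x)=\mu_s(N-x)$ for all $x$ (the absolutely continuous part contributes nothing, since $m(N-x)=m(N)=0$), while a Tonelli computation yields $\int_{\Rd}P_{t_0}f\,dm=m(N)\,\mu_s(\Rd)=0$. Thus the function $P_{t_0}f$, continuous by hypothesis, vanishes $m$-almost everywhere and hence identically, which contradicts $P_{t_0}f(0)=\mu_{t_0}(N)=\mu_s(\Rd)>0$.

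Once $\mu_{t_0}\ll m$ is established, for every $t\ge t_0$ the stationarity and independence of the increments give $\mu_t=\mu_{t_0}*\mu_{t-t_0}$, and the convolution of an absolutely continuous measure with any probability measure remains absolutely continuous; consequently each $P_t(x,\cdot)=\delta_x*\mu_t$ is absolutely continuous---a fortiori it has a non-trivial absolutely continuous component---for all $t\ge t_0$. This is precisely condition~(2) of Theorem~\ref{ex1}, which then delivers the coupling property together with the convergence rate \eqref{ex11}. I expect the middle paragraph to be the only non-routine point, and even there the sole subtleties are keeping the translation/convolution conventions consistent and invoking the elementary fact that a continuous function vanishing Lebesgue-almost everywhere is identically zero.
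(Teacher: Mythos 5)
Your proposal is correct, and it follows the same overall strategy as the paper---reduce to condition~(2) of Theorem~\ref{ex1}---but it differs in how the key step is handled. The paper simply invokes Hawkes' theorem (\cite{HAW}, or \cite[Lemma 4.8.20]{jac}): for a convolution semigroup, mapping $B_b(\R^d)$ into $C_b(\R^d)$ at time $t_0$ is equivalent to the existence of a transition density, whence $P_t(x,\cdot)\ll\textup{Leb}$ for all $t\ge t_0$. You instead reprove the relevant implication of Hawkes' result from scratch: testing $P_{t_0}$ against $f=\I_N$, where $N$ is a Lebesgue-null carrier of the singular part of $\mu_{t_0}$, gives a nonnegative continuous function with $\int P_{t_0}f\,dm=m(N)\,\mu_s(\R^d)=0$ by Tonelli and translation invariance, hence $P_{t_0}f\equiv 0$, contradicting $P_{t_0}f(0)=\mu_s(\R^d)>0$. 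This argument is sound (the only facts used are positivity of the operator, the identity $P_{t_0}\I_N(x)=\mu_{t_0}(N-x)$, and the elementary observation that a continuous function vanishing $m$-a.e.\ vanishes identically), and the propagation to $t\ge t_0$ via $\mu_t=\mu_{t_0}*\mu_{t-t_0}$ is standard. What your route buys is a self-contained proof that avoids the external citation and in fact establishes full absolute continuity of $\mu_t$ for $t\ge t_0$, which is stronger than the mere existence of an absolutely continuous component required by Theorem~\ref{ex1}(2); what it costs is a paragraph of argument in place of a one-line reference.
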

\begin{proof}
    By assumption $P_{t_0}$ is a convolution operator which maps $B_b(\R^d)$ into $C_b(\R^d)$. Due to a result by Hawkes, cf.\ \cite{HAW} or \cite[Lemma 4.8.20]{jac}, $P_{t_0}$ and all $P_t$ with $t\geq t_0$ are of the form $P_{t}(x) = p_{t}*f(x)$, where $p_t(x)$ is the transition density of the process. Therefore condition (2) of Theorem \ref{ex1} is satisfied.
\end{proof}

Now, we turn to the proof of Theorem \ref{ex1}.
\begin{proof}[Proof of Theorem $\ref{ex1}$]
As mentioned in Section \ref{section1}, the coupling property is equivalent to \eqref{prex1}. Observe that
\begin{align*}
    \|\mu_1P_t-\mu_2P_t\|_{\var}
    &\le \|\mu_1P_t-P_t\|_{\var}+\|\mu_2P_t-P_t\|_{\var}\\
    &\le \int  \|\delta_x*P_t-\delta_0*P_t\|_{_{\var}}\,\mu_1(dx)\\
    &\qquad+\int  \|\delta_x*P_t-\delta_0*P_t\|_{_{\var}}\,\mu_2(dx).
\end{align*}
Thus, if
\begin{equation}\label{prex2}
    \lim_{t\rightarrow\infty}\|\delta_x*P_t-\delta_0*P_t\|_{\var}=0
    \qquad\text{for\ \ }x\in\R^d,
\end{equation}
then we can use the dominated convergence theorem to see that
\eqref{prex1} holds. Therefore, the assertions \eqref{prex1} and
\eqref{prex2} are equivalent.

Since $\|\delta_x*P_t-\delta_0*P_t\|_{\var}$ is decreasing in $t$, we see that \eqref{prex2} is also equivalent to
\begin{equation}\label{prex3}
    \lim_{n\rightarrow\infty}\|\delta_x*P_n-\delta_0*P_n\|_{\var}=0
    \qquad\text{for\ \ }x\in\R^d.
\end{equation}
Let $\xi_1, \xi_2, \ldots$ be iid random variables on $\R^d$ with
$\xi_1\sim\mu_1:=\Pp(X_1\in \cdot)$ and set $S_n=\sum_{i=1}^n\xi_i$
for $n\ge1$ and $S_0=0$. Since the increments of a L\'evy process
are independent and stationary, \eqref{prex3} is the same as
\begin{equation}\label{prex4}
    \lim_{n\rightarrow\infty}\|\Pp(x+S_n\in \cdot)-\Pp(S_n\in \cdot)\|_{\var}=0
    \qquad\text{for\ \ }x\in\R^d.
\end{equation}
According to \cite[Remark (ii), Page 124---Page 125]{LR}
or
\cite[Chapter 3, Section 3, Theorem 3.9]{RU},
 \eqref{prex4}
holds if, and only if, $\mu_1$ is spread out, i.e.\ for some
$m\ge1$, $\mu_1^{*m}=P_m(0,\cdot):=\Pp(X_m\in \cdot)$ has an
absolutely continuous component. Since the semigroup of a L\'{e}vy
process is a convolution semigroup, it is easy to see that for every
$t\ge m$, the transition probability $P_t(x,\cdot)$ has an
absolutely continuous part. Combining all the assertions above, we
have proved that the statements (1) and (2) are equivalent.

Moreover, the arguments used in the proof of Proposition \ref{time} together with \cite[Proposition 1]{LR} show that
$$
    \|\Pp(x+S_n\in \cdot)-\Pp(S_n\in\cdot)\|_{\var}=\mathsf{O}(n^{-1/2})\qquad\textrm{as\ \ }n\rightarrow\infty
$$
whenever the random walk $S_n$ has the coupling property. Therefore, \eqref{ex11} follows from the arguments used in the first part of the proof, in particular since $t\mapsto \|\delta_x*P_t-\delta_0*P_t\|_{\var}$ is decreasing and $\|\Pp(x+S_n\in \cdot)-\Pp(S_n\in \cdot)\|_{\var} = \|\delta_x*P_n-\delta_0*P_n\|_{\var}$.
\end{proof}

Let us finally derive some sufficient conditions in terms of the L\'{e}vy measure, which extend Theorem \ref{th2} and \cite[Theorem~3.1]{W1}.
\begin{theorem}[Sufficient criteria for successful couplings]\label{ex2}
    Let $(X_t)_{t\geq 0}$ be a L\'{e}vy process on $\R^d$ with L\'{e}vy measure $\nu\not\equiv 0$ and define $\varepsilon>0$ as in \eqref{eq:th2}, i.e.\ for  $B\in\mathscr{B}(\R^d)$
    $$
        \nu_\varepsilon(B)
        =
        \begin{cases}\qquad\nu(B),\qquad &\nu(\R^d)<\infty;\\
        \nu\{z\in B\::\: |z|\ge \varepsilon\},& \nu(\R^d)=\infty.
        \end{cases}
    $$
    If there exists some $\varepsilon>0$ such that one of the following conditions is satisfied
    \begin{itemize}
    \item[(1)] For some $l\ge1$,  ${{\nu}_\varepsilon^*}{^l}$ has an absolutely continuous component,
    \item[(2)] There exist $\delta>0$ and $l\ge1$ such that $\displaystyle\inf_{x\in\R^d,|x|\le \delta}{\nu_\varepsilon^*}{^l} \wedge(\delta_x*{\nu_\varepsilon^*}{^l})(\R^d)>0$,
    \end{itemize}
    then the process $X_t$ has the coupling property.

    Conversely, assume that $\nu(\R^d)<\infty$ and $X_t$ is a compound Poisson process with L\'{e}vy measure $\nu$. If $X_t$ has the coupling property, then there is some $l\geq 0$ such that  $\nu^{*l}$ has an absolutely continuous component.
\end{theorem}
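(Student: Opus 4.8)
The plan is to establish Theorem~\ref{ex2} by reducing each of its three assertions to results already available in the paper. For the forward direction, I would first observe that both conditions (1) and (2) are statements about the measure $\nu_\varepsilon$ and its convolution powers, and that by the splitting argument in Step~2 of the proof of Theorem~\ref{th2} it suffices to treat the compound Poisson process $X'_t$ with L\'evy triplet $(0,0,\nu_\varepsilon)$: indeed the contraction estimate there gives $\|P_t(x,\cdot)-P_t(y,\cdot)\|_{\var}\le\|P'_t(x,\cdot)-P'_t(y,\cdot)\|_{\var}$, so the coupling property of $X'_t$ forces that of $X_t$. Next I would pass from the compound Poisson process $X'_t$ to the associated random walk $S_n$ with step law $\nu_\varepsilon/\nu_\varepsilon(\R^d)$ via Proposition~\ref{wcp}: it is enough that $S_n$ admit a successful coupling. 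Finally I would pass from $S_n$ to the subsampled walk $\tilde S_m := S_{ml}$ with step law $(\nu_\varepsilon/\nu_\varepsilon(\R^d))^{*l}$; since $S_{ml}=S'_{ml}$ implies the coupling of $S$ has succeeded, the coupling property of $\tilde S$ implies that of $S$. Under hypothesis (2), $\tilde S$ satisfies condition \eqref{rw1} (up to the harmless normalising constant $\nu_\varepsilon(\R^d)^l$), so Theorem~\ref{rw} applies and gives the coupling property; under hypothesis (1), the step law of $\tilde S$ is spread out, so by the Lindvall--Rogers result invoked in the proof of Theorem~\ref{ex1} (or directly by \cite[Proposition~1]{LR}) the walk $\tilde S$ has the coupling property. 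Chaining these reductions yields the coupling property of $X_t$ in both cases.

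For the converse, assume $X_t$ is compound Poisson with finite L\'evy measure $\nu$ and has the coupling property. By Theorem~\ref{ex1}, condition~(2) there holds: there is $t_0>0$ such that $P_{t_0}(0,\cdot)$ has an absolutely continuous component. But from the explicit formula \eqref{coup0},
$$
    P_{t_0}(0,\cdot)=e^{-\lambda t_0}\sum_{n=0}^\infty\frac{(\lambda t_0)^n}{n!}\,{\nu_0^*}^n,
$$
a convex combination of the measures ${\nu_0^*}^n=\nu^{*n}/\lambda^n$. If every $\nu^{*n}$ were purely singular with respect to Lebesgue measure, then so would be each ${\nu_0^*}^n$, and hence so would be the countable convex combination above (a countable sum of singular measures is singular), contradicting the absolute-continuity statement. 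Therefore ${\nu_0^*}^l$, and with it $\nu^{*l}$, has an absolutely continuous component for some $l\ge 0$.

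The main obstacle, and the point requiring the most care, is the passage from the convolution power $(\nu_\varepsilon/\nu_\varepsilon(\R^d))^{*l}$ back to $\nu_\varepsilon/\nu_\varepsilon(\R^d)$ itself at the level of coupling: one must check that a successful coupling of the subsampled random walk $\tilde S_m = S_{ml}$ genuinely produces a successful coupling of $S_n$ and not merely of $S_{ml}$. This is handled by the same device as in the proof of Proposition~\ref{wcp}: one first couples the blocks $(\xi_{(k-1)l+1},\dots,\xi_{kl})$ so that $\tilde S$ and $\tilde S'$ meet at some a.s.\ finite index $m_0$, and then for $n>m_0 l$ continues with a diagonal (identical) coupling of the individual steps, so that $S_n=S'_n$ for all such $n$; the coupling time of $S$ is then at most $(\tilde T+1)l<\infty$. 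Aside from this, one also needs the elementary measure-theoretic remarks --- that a convolution power of a singular measure is singular, and that a countable convex combination of singular measures is singular --- both of which are routine. I would present the forward direction as a short sequence of ``it suffices to show'' reductions, citing Theorem~\ref{th2}'s splitting step, Proposition~\ref{wcp}, and the block-coupling argument, and then dispatch the two cases by Theorem~\ref{rw} and \cite[Proposition~1]{LR} respectively, finishing with the two-line converse.
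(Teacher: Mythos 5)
Your proposal follows essentially the same route as the paper: the splitting/contraction reduction to the compound Poisson part with L\'evy measure $\nu_\varepsilon$, Proposition~\ref{wcp} to pass to the associated random walk, Theorem~\ref{rw} applied to the $l$-step block walk for condition (2) together with the Lindvall--Rogers spread-out criterion for condition (1), and the same singularity argument (a countable convex combination of measures singular with respect to Lebesgue measure is singular) for the converse. The only cosmetic difference is that your explicit block-coupling construction for passing from $S_{ml}$ back to $S_n$ is more than is needed, since $n\mapsto\|\Pp(x+S_n\in\cdot)-\Pp(S_n\in\cdot)\|_{\var}$ is non-increasing, so convergence to zero along the subsequence $n=ml$ already gives convergence for all $n$.
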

\begin{proof}
\emph{Step 1.}
The argument used in the proof of Theorem \ref{th2} shows that we only have to consider the coupling property for a compound Poisson process, whose L\'{e}vy measure is of the form $\nu_\varepsilon$. Let $S=(S_n)_{n\geq 1}$, $S_n = \xi_1+\ldots+\xi_n$, be a random walk on $\R^d$ with iid steps $\xi_1, \xi_2, \ldots$ such that $\xi_1\sim \nu_\varepsilon/\nu_\varepsilon(\R^d)$. Because of Proposition \ref{wcp}, it is sufficient to show that, under the assumptions stated in the theorem, $S$ has the coupling property.

As we have pointed out in the proof of Theorem \ref{ex1}, \cite[Remark (ii), Page 124---Page 125]{LR} shows that $S$ has the coupling property if, and only if, condition (1) holds. Again by \cite[Remark (ii), Page 124---Page 125]{LR}, condition (1) is
equivalent to saying that for any $x\in\R^d$, there exists $l\ge0$ such that ${\nu_\varepsilon^*}{^l}\wedge
(\delta_x*{\nu_\varepsilon^*}{^l})(\R^d)>0$. Clearly, such a condition is hard to check in applications.

Let $Z = (Z_n)_{n\geq 1}$, $Z_n=\zeta_1+\ldots+\zeta_n$, be a random
walk on $\R^d$ with iid steps $\zeta_1, \zeta_2, \ldots$ such that
$\zeta_1\sim \nu_\varepsilon^{*l}/\nu^l_\varepsilon(\R^d)$. That is,
$\zeta_i=\sum_{k=(i-1)l+1}^{il}\xi_k$, where $\xi_i$ is the step of
the random walk $S$ from the paragraph above. If (2) holds, Theorem
\ref{rw} shows that $Z$, hence $S$, has the coupling property.

\medskip
\emph{Step 2.} Let $(X_t)_{t\geq 0}$ be a compound Poisson process
with L\'{e}vy measure $\nu$ and suppose that $X_t$ has the coupling
property. Moreover, assume that none of the measures $\nu^{*l}$,
$l\geq 1$, has an absolutely continuous component. By the Lebesgue
decomposition, each measure $\nu^{*l}$ is mutually singular with
respect to Lebesgue measure $\textup{Leb}$. Thus, for every $l\ge1$,
there exists some set $A_l\in\Bb(\R^d)$ such that
$\textup{Leb}(A_l)= \nu^{*l}(A_l^c)=0$. For $A:=\bigcup_{i=1}^\infty
A_i$ we have $\textup{Leb}(A)=\nu^{*l}(A^c)=0$ for each $l\ge1$.
Therefore, by \eqref{coup0}, for every $x\in\R^d$ and $t>0$, the
transition probability $P_t(x,\cdot)$ of $X_t$ is singular with
respect to Lebesgue measure $\textup{Leb}$. This shows that
condition (2) of Theorem \ref{ex1} cannot hold, i.e.\ $X_t$ does not
have the coupling property. Since this contradicts our assumption,
the proof is finished.
\end{proof}

Theorem \ref{ex2} immediately yields that
\begin{corollary}
    Any L\'{e}vy process whose L\'{e}vy measure possesses an absolutely continuous component has the coupling property.
\end{corollary}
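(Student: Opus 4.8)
The plan is to read this off directly from part~(1) of Theorem~\ref{ex2} with $l=1$; the only thing that needs checking is that the absolutely continuous component of $\nu$ survives the truncation that defines $\nu_\varepsilon$.

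First I would take the Lebesgue decomposition $\nu=\nu_{\mathrm{ac}}+\nu_{\mathrm{s}}$, where by assumption $\nu_{\mathrm{ac}}\not\equiv 0$, i.e.\ $\nu_{\mathrm{ac}}(\R^d)>0$. In the case $\nu(\R^d)<\infty$ there is nothing to do, because then $\nu_\varepsilon=\nu$ for every $\varepsilon>0$ and so $\nu_\varepsilon$ already has a non-trivial absolutely continuous component. In the case $\nu(\R^d)=\infty$ I would argue as follows: since $\nu_{\mathrm{ac}}$ is absolutely continuous we have $\nu_{\mathrm{ac}}(\{0\})=0$, hence $\nu_{\mathrm{ac}}(\R^d\setminus\{0\})=\nu_{\mathrm{ac}}(\R^d)>0$; because $\{z:|z|\ge\varepsilon\}\uparrow\R^d\setminus\{0\}$ as $\varepsilon\downarrow0$, continuity of the measure from below lets me fix $\varepsilon>0$ so small that $\nu_{\mathrm{ac}}(\{z:|z|\ge\varepsilon\})>0$. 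For this $\varepsilon$ the restriction of $\nu_{\mathrm{ac}}$ to $\{|z|\ge\varepsilon\}$ is a non-zero absolutely continuous measure dominated by $\nu_\varepsilon$, so $\nu_\varepsilon$ has an absolutely continuous component.

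With this in hand, condition~(1) of Theorem~\ref{ex2} holds with $l=1$, and the coupling property of $X_t$ follows at once. I expect no real obstacle here: the argument is essentially bookkeeping, and the only point worth a line is the truncation step above, which is precisely why the theorem (and hence this corollary) is phrased in terms of $\nu_\varepsilon$ rather than $\nu$ itself.
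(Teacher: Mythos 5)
Your proposal is correct and follows exactly the route the paper intends: the corollary is stated as an immediate consequence of Theorem \ref{ex2}(1) with $l=1$, and the paper offers no further proof. Your extra paragraph verifying that the absolutely continuous component of $\nu$ survives the truncation to $\nu_\varepsilon$ (via $\nu_{\mathrm{ac}}(\{0\})=0$ and continuity from below along $\{|z|\ge\varepsilon\}\uparrow\R^d\setminus\{0\}$) is a sound and worthwhile piece of bookkeeping that the paper leaves implicit.
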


The coupling property of a L\'{e}vy process is intimately connected with the choice of state space. According to Theorem \ref{ex2}, a Poisson process on $\R$ does not have the coupling property, see also the discussion in Remark \ref{th22re}. If, however, the process is considered on $\Z$, the situation changes.

\begin{proposition}
    A Poisson process $X_t$ with state space $\Z$ has the coupling property.
\end{proposition}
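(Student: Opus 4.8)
The plan is to construct, for every pair of integer starting points, an explicit successful coupling, and then to pass to arbitrary initial distributions exactly as in the proof of Theorem~\ref{ex1}. It is worth recording first why the state space matters here: regarded as a L\'evy process on $\R$, a Poisson process has L\'evy measure $\lambda\delta_1$ and $P_t(x,\cdot)$ is carried by $x+\{0,1,2,\dots\}$, so $\|P_t(x,\cdot)-P_t(y,\cdot)\|_{\var}=2$ for all $t>0$ whenever $x-y\notin\Z$ (cf.\ Remark~\ref{th22re}); on $\Z$ this obstruction disappears, since there one always has $x-y\in\Z$.

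Fix $x,y\in\Z$ and let $\lambda$ be the rate. Let $(N_t^{(1)})_{t\ge0}$ and $(N_t^{(2)})_{t\ge0}$ be two \emph{independent} rate-$\lambda$ Poisson processes and set $\widetilde X_t':=x+N_t^{(1)}$ and $\widetilde X_t'':=y+N_t^{(2)}$; these are independent copies of the Poisson process started from $x$ and $y$. The difference $D_t:=\widetilde X_t'-\widetilde X_t''=(x-y)+N_t^{(1)}-N_t^{(2)}$ is a continuous-time Markov chain on $\Z$ with constant total jump rate $2\lambda$ whose embedded jump chain is the simple symmetric random walk on $\Z$. That walk is (null) recurrent and the chain is non-explosive, so $D$ hits $0$ in finite time almost surely; hence
\[
    \tau:=\inf\{t\ge0\::\:\widetilde X_t'=\widetilde X_t''\}=\inf\{t\ge0\::\:D_t=0\}<\infty\quad\text{a.s.}
\]
Define the coupling by $X_t':=\widetilde X_t'$ for all $t\ge0$, and $X_t'':=\widetilde X_t''$ for $t<\tau$, $X_t'':=\widetilde X_t'$ for $t\ge\tau$; this is well defined and c\`adl\`ag since $\widetilde X_\tau''=\widetilde X_\tau'$.

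The one point that requires care is that $(X_t'')_{t\ge0}$ is again a rate-$\lambda$ Poisson process started from $y$. This is the standard ``switch at a stopping time'' step: with respect to the joint filtration $\mathcal F_t=\sigma(N_s^{(1)},N_s^{(2)}\::\:s\le t)$ both $\widetilde X'$ and $\widetilde X''$ are Markov with the Poisson transition semigroup, $\tau$ is an $(\mathcal F_t)$-stopping time, and $\widetilde X_\tau'=\widetilde X_\tau''$; by the strong Markov property the post-$\tau$ increment process $(\widetilde X_{\tau+s}'-\widetilde X_\tau')_{s\ge0}$ is a rate-$\lambda$ Poisson process independent of $\mathcal F_\tau$, while $X''$ coincides with the genuine Poisson process $\widetilde X''$ on $[0,\tau]$ and thereafter continues by these independent, Poisson-distributed increments started from $X_\tau''=\widetilde X_\tau''$. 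Hence $X''$ has stationary independent increments with the Poisson law and is a Poisson process, so $(X_t',X_t'')_{t\ge0}$ is a coupling with the prescribed marginals whose coupling time is at most $\tau<\infty$ a.s. For general initial laws $\mu_1,\mu_2$ on $\Z$ one starts $(X_0',X_0'')$ from any coupling of $(\mu_1,\mu_2)$ and runs the same construction; conditioning on $(X_0',X_0'')=(x,y)$ shows the coupling time is a.s.\ finite. Therefore $X_t$ has the coupling property, and the only genuinely delicate point is the verification above that the glued marginal is still a Poisson process, which is handled by the strong Markov property.

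Alternatively one may argue purely analytically: for $x,y\in\Z$ with $m=x-y$ one has
\[
    \|P_t(x,\cdot)-P_t(y,\cdot)\|_{\var}=\sum_{k\in\Z}\big|p_t(k-m)-p_t(k)\big|\le|m|\sum_{k\in\Z}\big|p_t(k)-p_t(k-1)\big|,
\]
where $p_t(k)=e^{-\lambda t}(\lambda t)^k/k!$; since $p_t$ is unimodal the last sum telescopes to $2\max_k p_t(k)\sim2/\sqrt{2\pi\lambda t}\to0$. Combined with $\|\mu_1P_t-\mu_2P_t\|_{\var}\le\iint\|P_t(x,\cdot)-P_t(y,\cdot)\|_{\var}\,\mu_1(dx)\,\mu_2(dy)$ and dominated convergence, this again yields \eqref{prex1} and hence the coupling property.
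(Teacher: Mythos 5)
Your argument is correct, but it follows a genuinely different route from the paper. The paper's proof is soft: it observes that $Lf(i)=\lambda(f(i+1)-f(i))=0$ forces $f$ to be constant, invokes Cranston--Greven \cite{CG} (or \cite{CW}) to get the \emph{shift}-coupling property from this Liouville property, and then upgrades shift coupling to coupling via the Cranston--Wang criterion, verified through the elementary bound $P_{t+s}f(i)\ge e^{-\lambda s}P_tf(i)$. You instead build the classical independent (Ornstein-type) coupling by hand: two independent copies, whose difference is a rate-$2\lambda$ continuous-time walk on $\Z$ with simple symmetric embedded chain, hence recurrent and non-explosive, so the meeting time is a.s.\ finite; the switching step at the stopping time $\tau$ is exactly the delicate point and you justify it correctly by the strong Markov property of the L\'evy process with respect to the joint filtration. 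This makes the role of the state space transparent (the difference walk lives on $\Z$ precisely because $x-y\in\Z$) and avoids the external machinery of \cite{CG,CW}. Your second, analytic argument is also correct (under the paper's normalization $\|\mu\|_{\var}=\sum_k|\mu(\{k\})|$, using unimodality of the Poisson law and Stirling) and in fact buys strictly more than the paper's proof: it yields the explicit rate $\|P_t(x,\cdot)-P_t(y,\cdot)\|_{\var}=\mathsf{O}\big(|x-y|\,t^{-1/2}\big)$, consistent with the optimal order discussed in Remark \ref{th22re}, whereas the shift-coupling route gives no rate. Either of your arguments would serve as a complete replacement for the paper's proof; the paper's version has the advantage of illustrating the coupling/shift-coupling dichotomy that it wants to advertise.
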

\begin{proof}
    We use the coupling and shift coupling properties proved in \cite{CW}. Shift coupling is a slightly weaker notion than coupling. A Markov process $(X_t)_{t\geq 0}$ is said to have the shift coupling property, if for any two initial distributions $\mu, \nu$, there exists a coupling $(X_t,Y_t)$ with marginal processes such that
    \begin{itemize}
    \item $X_0\sim\mu$ and $Y_0\sim \nu$;
    \item there are finite stopping times $T_1, T_2$ such that $X_{T_1}=Y_{T_2}$.
    \end{itemize}
    Let $\lambda$ be the intensity of the Poisson process $X_t$. Then the infinitesimal generator is given by
    $Lf(i)=\lambda(f(i+1)-f(i))$ for $i\in \Z$. Thus, all harmonic functions $f:\Z \to \R$ are constant and, by
\cite[Theorem 1 and its second remark]{CG}
        or \cite[Theorem~2]{CW}, the process has the
shift coupling property.

    Similar to the proof of \cite[Proposition 3.3]{W1}, for any $s,t>0$, $i\in \Z$ and $f\in B_b(\Z)$
  with $f\ge0$,
    $$
        P_{t+s}(i)=\Ee f(i+X_{t+s})\ge\Ee f((i+X_{t}){\I_{\{X_{t+s}-X_t=0\}}})=e^{-\lambda s}\Ee f(i+X_{t})=e^{-\lambda s}P_tf(i),
    $$
    which shows that $X_t$ has the coupling property, cf.\ \cite[Theorem 5]{CW}.
\end{proof}

\medskip
\begin{acknowledgement} We would like to thank
Professor F.-Y. Wang for helpful discussions in Beijing and Swansea.
Financial support through DFG (grant Schi 419/5-1) (for R.L.S.) and
the Alexander-von-Humboldt Foundation and the Natural Science
Foundation of Fujian $($No.\ 2010J05002$)$ (for J.W.) is gratefully
acknowledged. We also would like to think an anonymous referee for
carefully reading the first draft of this paper.
\end{acknowledgement}

\end{document}